\theoremstyle{plain}
\newcommand{\newreptheorem}[2]{\newtheorem*{rep@#1}{\rep@title}\newenvironment{rep#1}[1]{\def\rep@title{#2 \ref*{##1}}\begin{rep@#1}}{\end{rep@#1}}}
\newtheorem{theorem}{Theorem}[section]
\newtheorem*{theorem-non}{Theorem}
\newtheorem*{non-lemma}{Lemma}
\newtheorem{lemma}[theorem]{Lemma}
\theoremstyle{definition}
\newtheorem{remark}[theorem]{Remark}
\DeclareMathOperator{\Aut}{Aut}
\DeclareMathOperator{\dkl}{D_{KL}}
\begin{document}
\title{Bounds on the mod 2
homology of random 2-dimensional determinantal hypertrees}
\author{Andr\'as M\'esz\'aros\thanks{HUN-REN Alfr\'ed R\'enyi Institute of Mathematics, \\Budapest, Hungary,\\ {\tt meszaros@renyi.hu}
}}
\date{}

\maketitle
\begin{abstract}
As a first step towards a conjecture of Kahle and Newman, we prove that if $T_n$ is a random $2$-dimensional determinantal hypertree on $n$ vertices, then 
    \[\frac{\dim H_1(T_n,\mathbb{F}_2)}{n^2}\]
    converges to zero in probability.

    Confirming a conjecture of Linial and Peled, we also prove the analogous statement for the $1$-out $2$-complex.

    Our proof relies on the large deviation principle for the Erd\H{o}s-R\'enyi random graph by Chatterjee and Varadhan.

%\medskip

%    \textbf{Keywords:} random simplicial complex, graphon, large deviation principle
\end{abstract}

\section{Introduction}

\emph{Determinantal hypertrees} are natural higher dimensional generalizations of a uniform random spanning tree of a  complete graph. They can be defined in any dimension, but in this paper, we restrict our attention to the $2$-dimensional case. A $2$-dimensional simplicial complex $S$ on the vertex set $[n]=\{1,2,\dots,n\}$ is called a ($2$-dimensional) hypertree, if
\begin{enumerate}[\hspace{30pt}(a)]
    \item\label{pra} $S$ has complete $1$-skeleton;
    \item\label{prb} The number of triangular faces of $S$ is ${n-1}\choose{2}$;
    \item\label{prc} The homology group $H_{1}(S,\mathbb{Z})$ is finite.
\end{enumerate}

In one dimension, a spanning tree must be connected, property \eqref{prc} above is the two dimensional analogue of this requirement. Note that any complex $S$ satisfying \eqref{pra} and \eqref{prc} must have at least ${n-1}\choose{2}$ triangular faces. For a graph $G$, if the reduced homology group $\tilde{H}_0(G,\mathbb{Z})$ is finite, then it is trivial. This statement fails in two dimensions since for a hypertree $S$, the order of $H_1(S,\mathbb{Z})$ can range from $1$ to $\exp(\Theta(n^2))$, see \cite{kalai1983enumeration}. Thus, while the homology of spanning trees is uninteresting, the homology of $2$-dimensional hypertrees is a very rich subject to study.

Kalai's generalization of Cayley's formula \cite{kalai1983enumeration} states that
\[\sum |H_{1}(S,\mathbb{Z})|^2=n^{{n-2}\choose {2}},\]
where the summation is over all the  hypertrees $S$ on the vertex set $[n]$. This formula suggests that the natural probability measure on the set of  hypertrees is the one where the probability assigned to a hypertree $S$ is \begin{equation}\label{measuredef}
    \frac{|H_{1}(S,\mathbb{Z})|^2}{n^{{n-2}\choose {2}}}.
\end{equation}
It turns out that this measure is a determinantal probability measure \cite{lyons2003determinantal,hough2006determinantal}. Thus, a random hypertree $T_n$ distributed according to \eqref{measuredef} is called a determinantal hypertree. General random determinantal complexes were investigated by Lyons \cite{lyons2009random}. While uniform random spanning trees are well-studied \cite{ald1,ald2,ald3,grimmett1980random,szekeres2006distribution,lyons2017probability}, a theory of determinantal hypertrees started to emerge only recently  \cite{kahle2022topology,meszaros2022local,werf2022determinantal,linial2019enumeration,meszaros2023coboundary,meszaros20242}.

It is quite natural to investigate the mod $2$ homology of random complexes. For example, one possible higher dimensional generalization of  expander graphs are the so-called coboundary expanders \cite{lubotzky2018high,gromov2010singularities,linial2006homological}, which are defined using the expansion properties of cochains with $\mathbb{F}_2$ coefficients.  Applications of coboundary
expanders include coding theory and property testing \cite{codes1,codes2,codes3,kaufman2014high}. 

Our main theorem is the following.
\begin{theorem}\label{thm1}
    Let $T_n$ be a $2$-dimensional determinantal hypertree on $n$ vertices. Then
    \[\frac{\dim H_1(T_n,\mathbb{F}_2)}{n^2}\]
    converges to zero in probability.
\end{theorem}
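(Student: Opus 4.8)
The plan is to bound the first moment of the number of mod-$2$ flat edge-labelings of $T_n$, rewrite that moment through an independent Erd\H os--R\'enyi graph, evaluate the resulting probability with the determinantal structure, and control the unavoidable contribution of ``structured'' graphs with the Chatterjee--Varadhan large deviation principle. First, since $T_n$ has complete $1$-skeleton and Euler characteristic $n-\binom n2+\binom{n-1}2=1$, over $\mathbb F_2$ we have $\dim H_1(T_n,\mathbb F_2)=\dim H^1(T_n,\mathbb F_2)=\log_2 N(T_n)-(n-1)$, where $N(T_n)$ is the number of cochains $f\colon E(K_n)\to\mathbb F_2$ with $f(xy)+f(yz)+f(zx)=0$ on every triangle of $T_n$ (the $2^{n-1}$ coboundaries always qualify), so by Markov it suffices to prove $\mathbb E\,N(T_n)=2^{o(n^2)}$. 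Now let $G\sim G(n,1/2)$ be independent of $T_n$, viewed as a uniform random $1$-cochain. Conditionally on $T_n$, ``$G$ is flat on $T_n$'' is the vanishing at $G$ of the coboundary map $C^1(K_n)\to C^2(T_n)$, an event of probability $2^{-\mathrm{rank}_{\mathbb F_2}\partial_2^{T_n}}=2^{\dim H^1(T_n,\mathbb F_2)-\binom{n-1}2}$; hence, with $\cF_G$ the set of triangles having an even number of $G$-edges,
\[
\mathbb E\,N(T_n)=2^{n-1}\,\mathbb E_{T_n}\!\bigl[2^{\dim H^1(T_n,\mathbb F_2)}\bigr]=2^{\binom n2}\,\mathbb P_{T_n,G}\!\bigl[T_n\subseteq\cF_G\bigr],
\]
so the goal becomes $\mathbb P_{T_n,G}[T_n\subseteq\cF_G]\le 2^{-\binom{n-1}2+o(n^2)}$, a bound that is tight since the $2^{n-1}$ cuts $G$ alone contribute $2^{n-1-\binom n2}=2^{-\binom{n-1}2}$.

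Condition on $G$. By the determinantal description of the measure \eqref{measuredef}, $T_n$ is the determinantal process on triangles with kernel the orthogonal projection onto $\Image(\partial_2^{\top})$; since $\partial_2\partial_2^{\top}=n\cdot\mathrm{Id}$ on $\ker\partial_1$ for the complete $2$-skeleton, that kernel is $K=\tfrac1n\partial_2^{\top}\partial_2$ (so $K_{T,T}=3/n$ and $K_{T,T'}=\pm1/n$ when $T,T'$ share an edge, $0$ otherwise). Therefore $\mathbb P_{T_n}[T_n\subseteq\cF_G\mid G]=\mathbb P[T_n\cap\cF_G^{\,c}=\emptyset]=\det(I-K|_{\cF_G^{\,c}})=n^{-\binom{n-1}2}\det\!\bigl(L_G|_{\ker\partial_1}\bigr)$, where $\cF_G^{\,c}$ is the set of $G$-odd triangles and $L_G:=\sum_{T\in\cF_G}\partial T(\partial T)^{\top}$ is the up-Laplacian of the subcomplex carrying $\cF_G$ (one uses $\sum_{\text{all }T}\partial T(\partial T)^{\top}=n\cdot\mathrm{Id}$ on $\ker\partial_1$). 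Because $\Tr\bigl(L_G|_{\ker\partial_1}\bigr)=3|\cF_G|$ and $n\binom{n-1}2=3\binom n3$, the AM--GM inequality yields the clean bound
\[
\mathbb P_{T_n}[T_n\subseteq\cF_G\mid G]\ \le\ \bigl(|\cF_G|/\tbinom n3\bigr)^{\binom{n-1}2},
\]
so everything reduces to showing $\mathbb E_{G\sim G(n,1/2)}\bigl[(|\cF_G|/\tbinom n3)^{\binom{n-1}2}\bigr]\le 2^{-\binom{n-1}2+o(n^2)}$ (or, should this AM--GM relaxation prove too lossy on some graphons, the corresponding statement for the genuine $n^{-\binom{n-1}2}\det(L_G|_{\ker\partial_1})$).

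Write $\gamma_G:=|\cF_G^{\,c}|/\binom n3$, so the target is $\mathbb E_G[(2(1-\gamma_G))^{\binom{n-1}2}]\le 2^{o(n^2)}$. Graphs with $\gamma_G\ge1/2$ contribute at most $1$ and are harmless; for $\gamma_G<1/2$ --- which is a large deviation, as $\gamma_G$ concentrates at $1/2$ --- expand into level sets: it is enough that $\mathbb P[\gamma_G\le\gamma]\,(2(1-\gamma))^{\binom{n-1}2}\le 2^{o(n^2)}$ for every $\gamma\in[0,1/2)$. The odd triangles form a $\{0,1\}$-valued $2$-cochain, $\gamma_G=|\delta_1 G|/\binom n3$, and a cone/averaging argument gives $|\delta_1 G|\ge\tfrac n3 d(G)$, with $d(G)$ the distance of $G$ to the cuts; so small $\gamma_G$ forces $G$ near a cut, and the Chatterjee--Varadhan principle yields $\mathbb P[\gamma_G\le\gamma]=\exp\!\bigl(-(1+o(1))\,n^2\,\inf\{I_{1/2}(W):\mathrm{odd\text{-}density}(W)\le\gamma\}\bigr)$, the rate being $\tfrac12\ln2$ at $\gamma=0$ (recovering $\mathbb P[G\text{ a cut}]$) and tending to $0$ as $\gamma\to\tfrac12$. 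Thus the whole estimate comes down to the variational inequality
\[
\inf\{I_{1/2}(W):\mathrm{odd\text{-}density}(W)\le\gamma\}\ \ge\ \tfrac12\ln\!\bigl(2(1-\gamma)\bigr),\qquad\gamma\in[0,\tfrac12],
\]
which holds with equality at both endpoints; one proves it by analysing the optimal graphons of prescribed odd-density (convexity of $h_{1/2}$, symmetrisation, and an explicit parametrisation of such graphons). The $1$-out $2$-complex is handled by the same scheme, the determinantal kernel being replaced by the (simpler, product) law of that complex.

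The crux is the last step: the balance between the Chatterjee--Varadhan rate function and the entropy-like factor $(2(1-\gamma))^{\binom{n-1}2}$ is tight --- cuts are the \emph{exact} extremizers of $\mathbb P_{T_n,G}[T_n\subseteq\cF_G]$ --- so there is no room to lose constants, and one must both identify the extremal graphons of given odd-density and verify the variational inequality above with matching rates. Where AM--GM is too crude one must instead control the genuine determinant $n^{-\binom{n-1}2}\det(L_G|_{\ker\partial_1})$ from the near-cut structure of $G$, using for instance that $K$ has eigenvalue $1$ on the span of the $n-2$ triangles through any single edge (so $\det(I-K|_{\cF_G^{\,c}})$ is tiny when $\cF_G^{\,c}$ nearly contains such a pencil). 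Ruling out spurious non-cut near-optimal graphons (near-bipartite with a small error, unbalanced blow-ups) is the delicate point on which the argument stands or falls.
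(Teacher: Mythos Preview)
Your overall architecture --- first moment on $|Z^1(T_n,\mathbb F_2)|$, rewritten as $\sum_G\mathbb P(G\in Z^1(T_n,\mathbb F_2))$, then balanced against the number of graphs in each level set via the Chatterjee--Varadhan large deviation principle --- is exactly the paper's. The gap is the step you yourself flag: the AM--GM bound $\mathbb P[T_n\subseteq\cF_G\mid G]\le(|\cF_G|/\tbinom n3)^{\binom{n-1}2}$ is genuinely too weak, and the variational inequality it would force, namely $I_{1/2}(W)\ge\tfrac12\ln\bigl(2(1-\mathrm{odd}(W))\bigr)$ (equivalently $H(W)\le-\ln(1-\mathrm{odd}(W))$) for every graphon $W$, is \emph{false}. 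For the constant graphon $W\equiv p$ one has $\mathrm{odd}(W)=3p(1-p)^2+p^3$, and at $p=0.2$ the inequality reads $0.5004\le 0.4976$; the failure persists, and widens, for all smaller $p>0$. Concretely, the contribution of near-$G(n,p)$ graphs to $\mathbb E_G\bigl[(|\cF_G|/\tbinom n3)^{\binom{n-1}2}\bigr]$ is already $e^{cn^2}$ with $c>0$ for such $p$, so the relaxation cannot close; and your proposed fallback of ``controlling the genuine determinant from the near-cut structure of $G$'' does not apply here either, since these graphons are nowhere near a cut.

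The paper's remedy is to replace AM--GM on the eigenvalues by Hadamard's inequality on the diagonal. One writes $\mathbb P(T_n(2)\subset Y)=n^{-\binom{n-2}2}\det\bigl(I_n[X,Y]I_n[X,Y]^\top\bigr)$ for $X=\binom{[n]}2\setminus F$ with $F$ a spanning tree; the diagonal entries of this Gram matrix are exactly the edge-degrees $t_Y(\tau)$, so Hadamard gives $\prod_{\tau\in X}t_Y(\tau)$, and averaging the logarithm over all spanning trees $F$ produces the edge-by-edge functional
\[
f(W)=\langle W,\log\circ Z_W\rangle+\langle\mathbbm{1}-W,\log\circ(\mathbbm{1}-Z_W)\rangle,\qquad Z_W=2W\circ(\mathbbm{1}-W).
\]
The required balance $f(W)+H(W)\le0$ now holds \emph{pointwise}: the integrand equals $-\dkl\bigl(\{W(x,y),1-W(x,y)\}\,\big\|\,\{Z_W(x,y),1-Z_W(x,y)\}\bigr)\le0$ by Gibbs' inequality, so no graphon optimisation is needed at all. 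The remainder of your outline (upper semicontinuity of the functional, level-set partition, LDP) then goes through exactly as in the paper.
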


We give an outline of the proof. Over $\mathbb{F}_2$ cochains are uniquely determined by their support. Since $T_n$ has complete $1$-skeleton, $C^1(T_n,\mathbb{F}_2)$ can be identified with the set of graphs on the vertex set $[n]$. Thus,
\begin{equation}\label{introsum}\mathbb{E}|Z^1(T_n,\mathbb{F}_2)|=\sum_G \mathbb{P}(G\in Z^1(T_n,\mathbb{F}_2)),\end{equation}
where the summation is over all graphs $G$ on $[n]$. Once we prove that this sum is $\exp(o(n^2))$, Theorem~\ref{thm1} follows using Markov's inequality. To estimate the sum~in~\eqref{introsum}, for $u>0$, we need an upper bound on the number of graphs $G$ such that $\mathbb{P}(G\in Z^1(T_n,\mathbb{F}_2))\ge \exp\left(-\frac{n^2}2 u\right)$. We obtain such an estimate by applying the \emph{large deviation principle} for the Erd\H{o}s-R\'enyi random graph by Chatterjee and Varadhan \cite{chatterjee2011large}. This large deviation principle is formulated using the language of \emph{dense graph limit} theory, which was developed by  Lov\'asz, Szegedy, T.~S\'os,  Borgs, Chayes, Vesztergombi,  Schrijver, Freedman and many others \cite{dl1,dl2,dl3,dl4,dl5,dl6,dl7,dl8,dl9,dl10}.  See the book of Lov\'asz \cite{lovasz2012large} for an overview of the area of graph limit theory. To apply the large deviation principle, we prove an estimate of the form \[\log \mathbb{P}(G\in Z^1(T_n,\mathbb{F}_2))\le (1+o(1))\frac{n^2}2 f(G)+o(n^2),\]
where $f$ is not only defined on graphs, but it also extends to an upper semicontinuous function on the space of graphons, which is a completion of the space of graphs. Upper semicontinuity is crucial here, because $f^{-1}([-u,\infty))$ needs to be closed in order to be able to apply the large deviation principle. Finally, we show that the estimates provided by the large deviation principle are strong enough to conclude that indeed  $\mathbb{E}|Z^1(T_n,\mathbb{F}_2)|=\exp(o(n^2))$.

The \emph{$1$-out $2$-complex} $S_2(n,1)$ is defined as follows. We start with the complete graph on the vertex set $[n]$, and independently for each edge $\{u,v\}\in {{[n]}\choose{2}}$, we choose a third vertex $w$ from the set $[n]\setminus \{u,v\}$ uniformly at random and add the triangular face $\{u,v,w\}$ to the complex. If a triangular face is chosen at multiple edges, we only keep one copy of that face. 

Our next theorem confirms a conjecture of Linial and Peled \cite[Section 5]{linial2019enumeration}.

\begin{theorem}\label{thm2}
Consider the $1$-out $2$-complex $S_2(n,1)$, then   
    \[\frac{\dim H_1(S_2(n,1),\mathbb{F}_2)}{n^2}\]
    converges to zero in probability.
\end{theorem}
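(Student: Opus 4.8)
\noindent\emph{Proof strategy for Theorem~\ref{thm2}.} The plan is to mirror the proof of Theorem~\ref{thm1}: I would bound, in expectation, the number of mod~$2$ cocycles of $S_2(n,1)$, and feed this into the Chatterjee--Varadhan large deviation principle. Since $S_2(n,1)$ has complete, hence connected, $1$-skeleton,
\[
\dim H_1(S_2(n,1),\mathbb{F}_2)=\binom{n}{2}-(n-1)-\operatorname{rank}_{\mathbb{F}_2}\partial_2=\dim\ker\delta^1-(n-1),
\]
where $\delta^1=\partial_2^{\mathsf{T}}$ is the mod~$2$ coboundary map. A $1$-cochain is the indicator vector of an edge set $H\subseteq\binom{[n]}{2}$, and $\delta^1(H)=0$ precisely when every $2$-face of $S_2(n,1)$ contains an even number of edges of $H$; call such an $H$ a \emph{cocycle}. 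So it suffices to prove that for every $\varepsilon>0$ we have $\mathbb{E}\bigl[\#\{\text{cocycles}\}\bigr]\le 2^{\varepsilon n^2}$ for all large $n$: Markov's inequality together with $\dim H_1(S_2(n,1),\mathbb{F}_2)\le\log_2\#\{\text{cocycles}\}$ then yields convergence to $0$ in probability.

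I would compute this expectation using the independence built into the model. For a fixed edge $\{u,v\}$ with its independently chosen third vertex $w$, the face $\{u,v,w\}$ meets $H$ in an even set iff $w\in N_H(u)\triangle N_H(v)$ when $uv\in H$, and $w\notin N_H(u)\triangle N_H(v)$ when $uv\notin H$; these $\binom{n}{2}$ events are independent over the choices of third vertices. Writing $s_H(u,v)=|N_H(u)\triangle N_H(v)|$, this gives the product formula
\[
\mathbb{P}\bigl(H\text{ is a cocycle}\bigr)=\prod_{uv\in H}\frac{s_H(u,v)-2}{n-2}\ \prod_{uv\notin H}\frac{\,n-2-s_H(u,v)\,}{n-2},
\]
and $\mathbb{E}\bigl[\#\{\text{cocycles}\}\bigr]$ equals the sum of the right-hand side over all graphs $H$ on $[n]$. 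This is the one step where the $1$-out model is genuinely simpler than the determinantal one: here the constraints decouple exactly.

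The heart of the argument is to estimate this sum by grouping the graphs $H$ according to their cut-distance class, i.e.\ their limiting graphon $W$. By the Chatterjee--Varadhan principle, the number of $H$ in a small cut-ball around $W$ is $\exp\bigl(\binom{n}{2}\operatorname{Ent}(W)+o(n^2)\bigr)$, where $\operatorname{Ent}(W)=\int_0^1\!\!\int_0^1 h(W(x,y))\,dx\,dy$ and $h(p)=-p\ln p-(1-p)\ln(1-p)$; and since $s_H(u,v)/n$ is, up to lower-order terms, the ``mixed density'' $r_W(x,y)=\int_0^1\bigl(W(x,z)+W(y,z)-2W(x,z)W(y,z)\bigr)dz$, the product formula gives $\mathbb{P}(H\text{ is a cocycle})\le\exp\bigl(\binom{n}{2}\Phi(W)+o(n^2)\bigr)$, with $\Phi(W)=\int_0^1\!\!\int_0^1\bigl(W\ln r_W+(1-W)\ln(1-r_W)\bigr)$. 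Everything then comes down to the variational inequality $\operatorname{Ent}(W)+\Phi(W)\le 0$ for every graphon $W$, which holds pointwise: for fixed $(x,y)$, writing $p=W(x,y)$ and $r=r_W(x,y)\in[0,1]$, the integrand of $\operatorname{Ent}+\Phi$ equals $-\bigl(p\ln\tfrac{p}{r}+(1-p)\ln\tfrac{1-p}{1-r}\bigr)=-\dkl(p\,\|\,r)\le 0$. Combining the three estimates yields $\mathbb{E}[\#\{\text{cocycles}\}]=2^{o(n^2)}$, which is what we needed.

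The main obstacle is that this variational problem is \emph{tight}: the supremum of $\operatorname{Ent}(W)+\Phi(W)$ over graphons equals $0$ and it is attained --- at $W\equiv\tfrac12$ (where $\operatorname{Ent}$ is maximal and $\Phi$ exactly compensates) and at complete-bipartite-type graphons (where both terms vanish). Hence one cannot extract an exponentially small bound and must control the subexponential corrections carefully. The delicate points are: (i) making the reduction to $\Phi$ rigorous although $t\mapsto\ln t$ is unbounded near $0$ --- one truncates off the rare pairs $uv\in H$ with $s_H(u,v)$ very small, whose factors are tiny and only help the upper bound; (ii) the covering/compactness step in graphon space, where $\operatorname{Ent}$ is merely upper semicontinuous and $\Phi$ must be estimated uniformly on cut-balls around the degenerate maximizers; and (iii) arranging the error bookkeeping so that summing over the subexponentially many cut-balls still produces $2^{o(n^2)}$. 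These are precisely the issues already handled in the proof of Theorem~\ref{thm1}, and the $1$-out case is the cleaner instance because of the exact decoupling in the displayed product formula.
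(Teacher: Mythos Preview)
Your proposal is correct and follows essentially the same route as the paper. You derive the exact product formula for $\mathbb{P}(H\text{ is a cocycle})$ (which the paper writes as $\prod_\tau t_Y(\tau)/(n-2)$), pass to the graphon functional $\Phi$ (the paper's $f$), reduce to the pointwise Kullback--Leibler inequality $\operatorname{Ent}(W)+\Phi(W)\le 0$, and defer the remaining bookkeeping to the proof of Theorem~\ref{thm1} --- exactly as the paper does.
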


Linial and Peled \cite{linial2019enumeration} proved a version of Theorem~\ref{thm2} where the field $\mathbb{F}_2$ is replaced with~$\mathbb{R}$. In another paper,  Linial and Peled \cite{linial2016phase} also  proved similar results on the real homology of the \emph{Linial-Meshulam complex} $Y_2(n,p)$. This random complex is a natural higher dimensional analogue of  Erd\H{o}s-R\'enyi random graphs and it was introduced by Linial and Meshulam \cite{linial2006homological} as follows: Let $Y_2(n,p)$ be a random complex on $n$ vertices with a complete $1$-skeleton where each triangular face is added with probability $p$ independently. Linial and Peled \cite{linial2016phase} proved that for any $c\ge 0$,
\[\frac{\dim H_1\left(Y_2\left(n,\frac{c}n\right),\mathbb{R}\right)}{n^2}\]
converges in probability to a constant $\beta_1(c)$, where $\beta_1(c)$ is determined by a certain fixed point equation. They also proved that there is a critical $c^*$, which separates the linear behaviour of $\beta_1(c)$ from the non-linear one. (In \cite{linial2019enumeration} and \cite{linial2016phase}, the results are formulated in terms of $H_2$, but using Euler's formula it is easy to rewrite these results in terms of $H_1$. Also, both papers cover the case $d>2$ too.)  Linial and  Peled build on the methods of Bordenave, Lelarge and Salez~\cite{bordenave2011rank}. In contrast to the present paper,  the paper \cite{bordenave2011rank} uses notions from sparse graph limit theory~\cite{aldous2016processes,lovasz2012large}. It also relies on spectral methods which are not available in positive characteristics as it is the case in Theorems~\ref{thm1} and \ref{thm2}. Note that  with real coefficients the homology of determinantal hypertrees is uninteresting, because it is straightforward from the definitions that $\dim H_1(T_n,\mathbb{R})=0$.  

See \cite{rc1,rc2,rc3,rc4,rc5,rc6,rc7,rc8,rc9,rc10} for further results on the homology of random simplicial complexes. 

Kahle and Newman~\cite{kahle2022topology} conjectured that $\dim H_1(T_n,\mathbb{F}_2)$ is of constant order, that is, the sequence of random variables $\dim H_1(T_n,\mathbb{F}_2)$ is tight. They had an even stronger conjecture, namely, they conjectured that for any prime $p$, the $p$-torsion $\Gamma_{n,p}$ of $H_1(T_n,\mathbb{Z})$  converges to the \emph{Cohen-Lenstra distribution}. By now we know that this stronger conjecture is not true for $p=2$~\cite{meszaros20242}, but we still believe that $\dim H_1(T_n,\mathbb{F}_2)$ is of constant order. For $p>2$, it is still open whether $\Gamma_{n,p}$ of $H_1(T_n,\mathbb{Z})$  converges to the Cohen-Lenstra distribution, that is, whether  for any finite abelian $p$-group $G$, we have
\[\lim_{n\to\infty} \mathbb{P}(\Gamma_{n,p}\cong G)=\frac{1}{|\Aut(G)|}\prod_{j=1}^{\infty}\left(1-p^{-j}\right).\]
This conjecture would imply that
\[\lim_{n\to\infty} \mathbb{P}(\dim  H_1(T_n,\mathbb{F}_p)=r)=p^{-r^2} \prod_{j=1}^{r} \left(1-p^{-j}\right)^{-2} \prod_{j=1}^{\infty}\left(1-p^{-j}\right).\]
The same should be also true if we consider the uniform distribution on the hypertrees, see~\cite{kahle2020cohen} for some numerical evidence. 
See the survey of Wood~\cite{wood2022probability} for more information on the Cohen-Lenstra heuristics. For a related model motivated by determinantal hypertrees, the Cohen-Lenstra limiting distribution was established by the author for $p\ge 5$ \cite{meszaros2023cohen}.

In light of the conjecture above it would be interesting to replace the denominator $n^2$ in Theorem~\ref{thm1} with something smaller.

Determinantal hypertrees and $1$-out complexes seem to be very similar in several aspects, for example they also have the same local weak limit \cite{meszaros2022local}. Can we give a general explanation for this phenomenon? Maybe there is a coupling of $T_n$ and $S_2(n,1)$ such that their symmetric difference is small under this coupling. 

\bigskip

\textbf{Acknowledgement:} The author is grateful to Elliot Paquette, Mikl\'os Ab\'ert and B\'alint Vir\'ag for the useful discussions.
The author was supported by the NSERC discovery grant of B\'alint Vir\'ag, the KKP 139502 project, the Dynasnet European Research Council Synergy project -- grant number ERC-2018-SYG 810115, and the NKKP-STARTING 150955 project.

\section{Preliminaries on dense graph limits}

\subsection{The space of graphons}

The notations are not uniform in the literature, we mainly follow the notations of the book~\cite{lovasz2012large}.

Let $\mathcal{W}$ be the set of all bounded symmetric measurable functions $W:[0,1]^2\to\mathbb{R}$. The elements of $\mathcal{W}$ will be called kernels. Let $\mathcal{W}_0=\{W\in\mathcal{W}\,:\, 0\le W\le 1\}$, the elements of this set will be called graphons.

Given a graph $G$ on the vertex set $\{1,2,\dots,n\}$, the corresponding graphon $W_G$ is defined by
\[W_G(x,y)=\begin{cases}
1&\text{if $(\lceil xn \rceil,\lceil yn \rceil)$ is an edge of $G$},\\
0&\text{otherwise.}
\end{cases}\]

The constant $1$ graphon will be denoted by $\mathbbm{1}$.

Given a kernel $W\in \mathcal{W}$, we define the operator $T_W:L_{\infty}([0,1])\to L_{\infty}([0,1])$ by the formula
\[(T_Wf)(x)=\int_0^1 W(x,y)f(y)dy.\]
The same formula can be used to define an operator from $L_{\infty}([0,1])$ to $ L_{1}([0,1])$. With a slight abuse of notation we will also use $T_W$ to denote this operator.

Given two kernels $V,W\in \mathcal{W}$, their operator product $V\circ W$, which is a bounded measurable function from $[0,1]^2$ to $\mathbb{R}$, is defined as
\[(V\circ W)(x,y)=\int_0^1 V(x,t)W(t,y)dt.\]
Note that $V\circ W$ is not necessarily symmetric, but it will not be a problem for us. Also, in this paper $V\circ W$ will be symmetric in most cases anyway. The motivation behind this definition is the identity $T_VT_W=T_{V\circ W}$. (Note that the formula for $T_W$ makes sense even if we drop the requirement that $W$ is symmetric.)

We define the $L_\infty$ and $L_1$ norms on $\mathcal{W}$ by
\[\|W\|_\infty=\sup_{(x,y)\in [0,1]^2} |W(x,y)|\qquad\text{ and }\qquad\|W\|_1=\int_{[0,1]^2} |W(x,y)| dxdy.\]

We define the cut norm on $\mathcal{W}$ by
\[\|W\|_\square=\sup_{S,T} \left|\int_{S\times T} W(x,y) dxdy\right|,\]
where the supremum is over all measurable subsets $S,T$ of $[0,1]$. 

Clearly $\|W\|_\square\le \|W_1\|$. 

Finally, we can also consider the norm of $T_W$ as an operator $L_{\infty}([0,1])\to L_1([0,1])$. This norm turns out to be equivalent to the cut norm as we have the inequalities
\begin{equation}\label{squarevsoperator}\|W\|_\square\le \|T_W\|_{\infty\to 1} \le 4\|W\|_\square,
\end{equation}
see \cite[Lemma 8.11]{lovasz2012large}. Note that the norms above also make sense if we drop the requirement that $W$ is symmetric.

Let $S_{[0,1]}$ be the set of invertible measure preserving maps $[0,1]\to [0,1]$. Given $\varphi\in S_{[0,1]}$ and $W\in \mathcal{W}$, we define $W^\varphi\in \mathcal{W}$ by
\[W^{\varphi}(x,y)=W(\varphi(x),\varphi(y)).\]

The cut distance of two kernels $V,W\in \mathcal{W}$ is defined as
\[\delta_\square(V,W)=\inf_{\varphi\in S_{[0,1]}} \|V-W^\varphi\|_\square.\]

The cut distance only gives a pseudometric, since different kernels can have distance zero. Identifying the kernels at distance zero, we obtain the set $\widetilde{\mathcal{W}}$ of unlabeled kernels. The equivalence class of a kernel $W$ will be denoted by $\widetilde{W}$. The set of unlabeled graphons $\widetilde{\mathcal{W}}_0$ is defined analogously. By a closed subset of $\widetilde{\mathcal{W}}_0$, we mean a closed subset of the metric space $(\widetilde{\mathcal{W}}_0,\delta_\square)$.

\subsection{A few continuity results}

\begin{lemma}\label{normofproduct}
Let $V,W\in\mathcal{W}$, then
\[\|V\circ W\|_1\le 4\|V\|_
\square \|W\|_{\infty}\quad\text{ and }\quad\|W\circ V\|_1\le 4\|V\|_
\square \|W\|_{\infty}.\]
\end{lemma}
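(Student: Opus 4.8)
The plan is to reduce the claim to the operator-norm inequality \eqref{squarevsoperator} by interpreting each $L_1$ norm of an operator product as the $\infty\to 1$ norm of the corresponding operator. Concretely, I would first recall that for any kernel $U\in\mathcal{W}$ we have
\[
\|U\|_1=\int_{[0,1]^2}|U(x,y)|\,dx\,dy=\sup_{g:[0,1]\to[-1,1]}\int_{[0,1]}\Big(\int_{[0,1]} U(x,y)g(y)\,dy\Big)\,dx,
\]
where the supremum is attained (up to approximation) by $g(y)=\operatorname{sgn}$ of the inner integral. Wait — that is not quite $\|T_U\|_{\infty\to 1}$, which also allows an outer test function. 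So instead I would directly bound: for the first inequality, write $V\circ W=T_V T_W$ at the level of operators, so $\|V\circ W\|_1\le$ something is not immediate; rather the cleanest route is the pointwise/Fubini estimate below.

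First I would prove $\|W\circ V\|_1\le 4\|V\|_\square\|W\|_\infty$ (the second inequality), since $V$ sits on the right. By definition,
\[
\|W\circ V\|_1=\int_0^1\!\!\int_0^1\Big|\int_0^1 W(x,t)V(t,y)\,dt\Big|\,dx\,dy.
\]
For fixed $x$, set $f_x(t)=W(x,t)$, so $\|f_x\|_\infty\le\|W\|_\infty$. The inner object is $(T_{V^{\mathsf T}}f_x)(y)$ where $V^{\mathsf T}(y,t)=V(t,y)$; hence $\int_0^1|\cdots|\,dy=\|T_{V^{\mathsf T}}f_x\|_1\le\|T_{V^{\mathsf T}}\|_{\infty\to 1}\|f_x\|_\infty\le 4\|V^{\mathsf T}\|_\square\|W\|_\infty=4\|V\|_\square\|W\|_\infty$, using \eqref{squarevsoperator} and the fact that transposing a kernel swaps the roles of $S$ and $T$ in the cut norm, so $\|V^{\mathsf T}\|_\square=\|V\|_\square$. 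Integrating over $x\in[0,1]$ (a probability space) preserves the bound, giving the second inequality. For the first inequality, I would apply the same argument to $\|V\circ W\|_1$: fix $y$, set $g_y(t)=W(t,y)$ with $\|g_y\|_\infty\le\|W\|_\infty$, observe that $x\mapsto\int_0^1 V(x,t)g_y(t)\,dt=(T_V g_y)(x)$, so $\int_0^1|\cdots|\,dx=\|T_V g_y\|_1\le\|T_V\|_{\infty\to 1}\|W\|_\infty\le 4\|V\|_\square\|W\|_\infty$, then integrate over $y$. Alternatively, the first inequality follows from the second applied to the transposed kernels together with $\|U^{\mathsf T}\|_1=\|U\|_1$.

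The only genuinely delicate point — and the step I would be most careful about — is the interplay between transposition, symmetry, and the $\infty\to 1$ operator norm: one must check that $\|V^{\mathsf T}\|_\square=\|V\|_\square$ (immediate from the definition of $\|\cdot\|_\square$ as a supremum over ordered pairs $(S,T)$, by swapping them) and that \eqref{squarevsoperator} is being invoked for the possibly non-symmetric kernel $V^{\mathsf T}$ — which the excerpt explicitly permits, noting the norms and \eqref{squarevsoperator} make sense without symmetry. Everything else is Fubini–Tonelli (legitimate since $V,W$ are bounded measurable, so $V\circ W$ is bounded measurable) and the trivial observation that averaging over the remaining variable in $[0,1]$ does not increase the bound. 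So there is no serious obstacle; the proof is essentially a two-line unpacking of \eqref{squarevsoperator} once the transpose bookkeeping is set up correctly.
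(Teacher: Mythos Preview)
Your argument is correct and matches the paper's proof of the first inequality exactly (fix $y$, apply $T_V$ to the slice $g_y(\cdot)=W(\cdot,y)$, and invoke \eqref{squarevsoperator}). For the second inequality the paper takes a shorter route: since $V,W\in\mathcal{W}$ are symmetric, $(W\circ V)(y,x)=(V\circ W)(x,y)$, so $\|W\circ V\|_1=\|V\circ W\|_1$ follows immediately---which also renders your transpose bookkeeping unnecessary, as $V^{\mathsf T}=V$.
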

\begin{proof}
For $y\in[0,1]$, let  $f_{y}(x)=W(x,y)$ and $g_y(x)=(V\circ W)(x,y)$. Note that $g_y=T_V f_y$. Then
\begin{align*}\|V\circ W\|_1=\int_0^1 \|g_y\|_1 dy&=\int_0^1 \|T_V f_y\|_1dy \\&\le \int_0^1 \|T_V\|_{\infty \to 1 } \|f_y\|_{\infty} dy\le \|T_V\|_{\infty \to 1 }\|W\|_{\infty}\le 4\|V\|_
\square \|W\|_{\infty},\end{align*}
where the last inequality follows from \eqref{squarevsoperator}. Thus, the first inequality follows.  Since $V$ and $W$ are symmetric, we have $(V\circ W)(x,y)=(W\circ V)(y,x)$. Therefore, $\|W\circ V\|_1=\|V\circ W\|_1$. Thus, the second inequality follows from the first one.
\end{proof}

\begin{lemma}\label{limitofproduct}
Let $V,V_1,V_2,\dots$ and $W,W_1,W_2\dots$ be kernels. Assume that there is a constant~$C$ such that  $\|V_n\|_\infty\le C$ for all $n$. If $\lim_{n\to\infty} V_n=V$ and $\lim_{n\to\infty} W_n=W$ in the cut norm, then $\lim_{n\to\infty} V_n\circ W_n=V\circ W$ in the $L_1$ norm.  
\end{lemma}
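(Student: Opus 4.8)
The plan is to prove this by a triangle-inequality split: write
\[
V_n\circ W_n - V\circ W = V_n\circ(W_n-W) + (V_n-V)\circ W,
\]
and then bound each term in the $L_1$ norm using Lemma~\ref{normofproduct}. For the first term, Lemma~\ref{normofproduct} gives $\|V_n\circ(W_n-W)\|_1\le 4\|V_n\|_\square\|W_n-W\|_\infty$, but this is the wrong pairing since we only control $\|W_n-W\|_\square$, not its $L_\infty$ norm; so instead I would apply the lemma with the roles arranged so that the cut norm falls on the factor that converges in cut norm. Concretely, $\|V_n\circ(W_n-W)\|_1\le 4\|W_n-W\|_\square\|V_n\|_\infty\le 4C\|W_n-W\|_\square$, using the uniform bound $\|V_n\|_\infty\le C$ and the hypothesis $\|W_n-W\|_\square\to 0$. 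For the second term, $\|(V_n-V)\circ W\|_1\le 4\|V_n-V\|_\square\|W\|_\infty$, and $\|V_n-V\|_\square\to 0$ by hypothesis while $\|W\|_\infty<\infty$ since $W\in\mathcal{W}$ is bounded. Adding the two bounds shows $\|V_n\circ W_n-V\circ W\|_1\to 0$.

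One small point to check carefully: Lemma~\ref{normofproduct} as stated requires its inputs to be kernels, i.e. bounded symmetric measurable functions, so I should note that $W_n-W$, $V_n-V$ are indeed kernels (differences of kernels are kernels), and that the cut norm and $L_\infty$ norm used there are exactly the ones we are invoking. I also want to make sure the uniform bound $\|V_n\|_\infty\le C$ really is needed and is available — it is exactly the hypothesis of the lemma, and it is essential because without it the first term cannot be controlled by the cut-norm convergence of $W_n$ alone (a sequence converging to $0$ in cut norm can have unbounded $L_\infty$ norm).

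There is no real obstacle here; the only thing that requires a moment's thought is which of the two inequalities in Lemma~\ref{normofproduct} to apply to which term, so that the cut norm always lands on the factor for which we have cut-norm control, and the $L_\infty$ norm always lands on a factor that is uniformly bounded (the $V_n$'s by assumption, and $W$ because it is a fixed kernel). Once that bookkeeping is set up, the proof is three lines. I would write it out as the two displayed inequalities above followed by the conclusion, perhaps remarking that the constant $C$ can be taken to also dominate $\|V\|_\infty$ (since $V$ is the cut-norm limit of the $V_n$, though in fact we do not even need a bound on $\|V\|_\infty$ for this argument).
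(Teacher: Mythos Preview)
Your proposal is correct and essentially identical to the paper's proof: the paper uses the same triangle-inequality split $V_n\circ W_n - V\circ W = V_n\circ(W_n-W) + (V_n-V)\circ W$ and applies Lemma~\ref{normofproduct} exactly as you do, obtaining the bound $4C\|W_n-W\|_\square + 4\|V_n-V\|_\square\|W\|_\infty$. Your additional remarks (about which form of Lemma~\ref{normofproduct} to apply to which term, and that differences of kernels are kernels) are accurate but not strictly needed for the write-up.
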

\begin{proof}
Using Lemma \ref{normofproduct}, we have
\begin{align*}\|V_n\circ W_n-V\circ W\|_1&\le \|V_n\circ(W_n-W)\|_1+\|(V_n-V)\circ W\|_1
\\&\le 4\|W_n-W\|_\square \|V_n\|_\infty+4\|V_n-V\|_\square \|W\|_\infty\\&\le 4C\|W_n-W\|_\square +4\|V_n-V\|_\square \|W\|_\infty.
\end{align*}
Thus, the statement follows.
\end{proof}

Given two kernels $V,W\in\mathcal{W}$, we define
\[\langle V,W\rangle=\int_{[0,1]^2} V(x,y)W(x,y) dxdy.\]

Note that in certain cases this definition makes sense even if we drop the requirement that $V$ and $W$ are bounded. For example, $\langle V,W\rangle$ is defined if $V(x,y)W(x,y)\le 0$ for all $(x,y)\in [0,1]^2$, of course, it might be equal to $-\infty$.

\begin{lemma}\label{lemmaangle0}
    Let $V_1,V_2,\dots$ and $W$ be kernels. Assume that there is a constant $C$ such that   $\|V_n\|_\infty\le C$ for all $n$. If $\lim_{n\to\infty} \|V_n\|_\square=0$, then $\lim_{n\to\infty} \langle V_n,W\rangle =0$.
\end{lemma}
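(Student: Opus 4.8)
The plan is to reduce to the case where $W$ is a ``step kernel'', i.e.\ constant on each block $A_i\times A_j$ of a product partition $[0,1]=A_1\cup\dots\cup A_k$ into measurable sets, and to treat that case directly from the definition of the cut norm. Fix $\varepsilon>0$. First I would choose a step kernel $W'=\sum_{i,j=1}^{k}c_{ij}\mathbbm{1}_{A_i\times A_j}$ with $\|W-W'\|_1<\varepsilon$ and $|c_{ij}|\le\|W\|_\infty$ for all $i,j$. This is possible because finite linear combinations of indicators of rectangles $A\times B$ are dense in $L_1([0,1]^2)$: simple functions are dense, the indicator of any measurable set is approximable in $L_1$ by a finite union of dyadic squares, and dyadic squares of a common level sit inside one product partition. (Equivalently, one may take $A_1,\dots,A_k$ to be the $k$ equal subintervals of $[0,1]$ and let $W'$ be the average of $W$ on each block; this converges to $W$ in $L_1$ as $k\to\infty$ by the Lebesgue density theorem, and since $W$ is bounded the averages are bounded by $\|W\|_\infty$.)

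Next I would split $\langle V_n,W\rangle=\langle V_n,W'\rangle+\langle V_n,W-W'\rangle$. For the second term, Hölder's inequality gives $|\langle V_n,W-W'\rangle|\le\|V_n\|_\infty\,\|W-W'\|_1\le C\varepsilon$. For the first term, expanding $W'$ yields $\langle V_n,W'\rangle=\sum_{i,j}c_{ij}\int_{A_i\times A_j}V_n(x,y)\,dx\,dy$, and by the definition of $\|\cdot\|_\square$ each of these rectangle integrals has absolute value at most $\|V_n\|_\square$; hence $|\langle V_n,W'\rangle|\le\bigl(\sum_{i,j}|c_{ij}|\bigr)\|V_n\|_\square$. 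Since the partition — and therefore $\sum_{i,j}|c_{ij}|$ — is fixed independently of $n$, the hypothesis $\|V_n\|_\square\to 0$ forces $\langle V_n,W'\rangle\to 0$. Combining the two estimates gives $\limsup_{n\to\infty}|\langle V_n,W\rangle|\le C\varepsilon$, and letting $\varepsilon\to 0$ completes the argument.

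I do not expect a genuine obstacle here; the only point needing a moment's care is the $L_1$-density of product step kernels, which is routine. If one prefers to avoid explicit partitions, the same content can be packaged as follows: pass to a subsequence along which $|\langle V_n,W\rangle|$ realizes its $\limsup$; the boundedness $\|V_n\|_\infty\le C$ makes $W\mapsto\lim_n\langle V_n,W\rangle$ a bounded linear functional on $L_1([0,1]^2)$ (after a further subsequence using weak-$*$ compactness of the unit ball of $L_\infty$), and it vanishes on the dense subspace spanned by the $\mathbbm{1}_{A\times B}$ exactly by the cut-norm hypothesis, hence is identically zero.
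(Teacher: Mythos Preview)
Your proposal is correct. The paper itself does not give a self-contained argument but simply cites \cite[Lemma 8.22]{lovasz2012large}; your step-kernel approximation combined with the rectangle bound from the definition of $\|\cdot\|_\square$ is precisely the standard proof of that lemma, so the approaches are essentially the same.
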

\begin{proof}
This follows from \cite[Lemma 8.22]{lovasz2012large}. (In the statement \cite[Lemma 8.22]{lovasz2012large} it is assumed that $C=1$, but this is clearly true for any $C$.)
\end{proof}

\begin{lemma}\label{lemmaangle}
    Let $V,V_1,V_2,\dots$ and $W,W_1,W_2\dots$ be kernels. Assume that there is a constant~$C$ such that $\|V\|_\infty\le C$ and   $\|V_n\|_\infty\le C$ for all $n$. If $\lim_{n\to\infty} V_n=V$ in the cut norm and $\lim_{n\to\infty} W_n=W$ in the $L_1$ norm, then $\lim_{n\to\infty} \langle V_n,W_n\rangle =\langle V,W\rangle$.
\end{lemma}
\begin{proof}
We have
\[\langle V_n,W_n\rangle-\langle V,W\rangle=\langle V_n,W_n-W\rangle+\langle V_n-V,W\rangle.\]
Since $\|V_n-V\|_\infty\le 2C$, we have $\lim_{n\to\infty} \langle V_n-V,W\rangle=0$ by Lemma~\ref{lemmaangle0}. Also
\[|\langle V_n,W_n-W\rangle|\le \|V_n\|_\infty \|W_n-W\|_1\le C\|W_n-W\|_1.\]
Thus, $\lim_{n\to\infty} \langle V_n,W_n-W\rangle=0$.
\end{proof}
\begin{remark}
    In Lemma \ref{lemmaangle}, the condition that $\lim_{n\to\infty} W_n=W$ in the $L_1$ norm can not be relaxed to convergence in cut norm as the following example shows. By considering the graphons corresponding to Erd\H{o}s-R\'enyi random graph, one can find a sequence of $0-1$ valued graphons $W_n$ converging to $\frac{1}2\mathbbm{1}$. Then $\lim_{n\to\infty}\langle W_n,W_n\rangle=\lim_{n\to\infty}\langle W_n,\mathbbm{1}\rangle=\frac{1}2$, but $\langle \frac{1}2\mathbbm{1},\frac{1}2\mathbbm{1}\rangle=\frac{1}4$.
\end{remark}

Given a measurable function $f:\mathbb{R}\to\mathbb{R}$ and a kernel $W\in \mathcal{W}$, $f\circ W$ is a symmetric measurable function from $[0,1]^2$ to $\mathbb{R}$. If $f$ is bounded, then $f\circ W\in \mathcal{W}$.

The proof of the next lemma is straightforward.

\begin{lemma}\label{Lipschitz}
Assume that $\lim_{n\to\infty} W_n=W$ in $L_1$-norm. If $f:\mathbb{R}\to\mathbb{R}$ is a bounded Lipschitz function, then $\lim_{n\to\infty} f\circ W_n=f\circ W$ in $L_1$-norm. 
\end{lemma}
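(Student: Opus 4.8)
The plan is to reduce everything to a single pointwise inequality. I would fix a Lipschitz constant $L$ for $f$, so that $|f(s)-f(t)|\le L|s-t|$ for all $s,t\in\mathbb{R}$, and then apply this with $s=W_n(x,y)$ and $t=W(x,y)$ to obtain
\[|(f\circ W_n)(x,y)-(f\circ W)(x,y)|\le L\,|W_n(x,y)-W(x,y)|\]
for every $(x,y)\in[0,1]^2$. Before integrating I would note the two routine sanity checks: since a Lipschitz function is continuous, hence Borel measurable, the compositions $f\circ W_n$ and $f\circ W$ are measurable, and since $f$ is bounded these compositions lie in $\mathcal{W}$, so the $L_1$-norms appearing below are finite and the statement makes sense within the stated framework.

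The remaining step is to integrate the displayed bound over $[0,1]^2$, which gives
\[\|f\circ W_n-f\circ W\|_1=\int_{[0,1]^2}|(f\circ W_n)(x,y)-(f\circ W)(x,y)|\,dx\,dy\le L\,\|W_n-W\|_1,\]
and the right-hand side tends to $0$ as $n\to\infty$ by hypothesis. I do not expect any genuine obstacle here — the lemma is essentially just the observation that post-composition with a Lipschitz function is $L_1$-continuous with the same Lipschitz constant, and the only things to verify are the measurability of the composition and the fact that the pointwise Lipschitz estimate passes through the integral, both of which are immediate.
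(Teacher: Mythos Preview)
Your argument is correct and is exactly the straightforward verification the paper has in mind; the paper does not spell out a proof but simply notes the lemma is immediate, and your pointwise Lipschitz bound integrated over $[0,1]^2$ is precisely that immediate argument.
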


\subsection{The large deviation principle for the Erd\H{o}s-R\'enyi random graph}

For $p\in (0,1)$, let $I_{p}:[0,1]\to \mathbb{R}$ be the function
\[I_{p}(u)=\frac{1}2 u\log\frac{u}{p}+\frac{1}2 (1-u)\log\frac{1-u}{1-p}.\]

For a graphon $W\in\mathcal{W}_0$, we define
\[I_p(W)=\int_{[0,1]^2} I_p(W(x,y))dxdy.\]

The function $I_p$ is also well-defined on $\widetilde{\mathcal{W}}_0$, see \cite[Lemma 2.1]{chatterjee2011large}. (To avoid confusion, we mention that the authors of \cite{chatterjee2011large} use $\mathcal{W}$ and $\widetilde{\mathcal{W}}$ to denote the space of graphons, in this paper, we follow the convention of \cite{lovasz2012large} and use $\mathcal{W}_0$ and $\widetilde{\mathcal{W}}_0$.)

Let $G(n,p)$ be the Erd\H{o}s-R\'enyi random graph, that is, the random graph on $n$
vertices where each edge is added independently with probability $p$.

\begin{theorem}[Chatterjee and Varadhan \cite{chatterjee2011large}]\label{largediv0} Let $p\in (0,1)$, $\widetilde{F}$ be a closed subset of $\widetilde{\mathcal{W}}_0$, and let $W_n=W_{G(n,p)}$. Then
\[\limsup_{n\to\infty} \frac{1}{n^2}\log \mathbb{P}(\widetilde{W}_n\in \widetilde{F})\le -\inf_{\widetilde{V}\in \widetilde{F}} I_p(\widetilde{V}).\]
\end{theorem}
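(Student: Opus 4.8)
The plan is to run the standard large deviation upper bound scheme: prove a local estimate near each individual graphon, then patch the local estimates together using compactness. Since $(\widetilde{\mathcal{W}}_0,\delta_\square)$ is compact (Lov\'asz--Szegedy) and $\widetilde F$ is closed, $\widetilde F$ is compact, so it suffices to prove the following local estimate: for every $\widetilde V\in\widetilde{\mathcal{W}}_0$ and every $\eta>0$ there is an $\epsilon>0$ with
\[\limsup_{n\to\infty}\frac{1}{n^2}\log\mathbb{P}\!\left(\delta_\square(\widetilde{W}_n,\widetilde V)<\epsilon\right)\le -I_p(\widetilde V)+\eta .\]
Granting this, fix $\eta>0$, cover $\widetilde F$ by the balls $B(\widetilde V,\epsilon_{\widetilde V})$ with $\widetilde V\in\widetilde F$, extract a finite subcover $B(\widetilde V_1,\epsilon_1),\dots,B(\widetilde V_r,\epsilon_r)$, and use $\mathbb{P}(\widetilde W_n\in\widetilde F)\le\sum_{i=1}^r\mathbb{P}(\delta_\square(\widetilde{W}_n,\widetilde V_i)<\epsilon_i)$ together with the fact that $\limsup_n n^{-2}\log(a_{1,n}+\dots+a_{r,n})=\max_i\limsup_n n^{-2}\log a_{i,n}$ for finitely many nonnegative sequences; since each $\widetilde V_i\in\widetilde F$ has $I_p(\widetilde V_i)\ge\inf_{\widetilde F}I_p$, this gives $\limsup_n n^{-2}\log\mathbb{P}(\widetilde W_n\in\widetilde F)\le-\inf_{\widetilde F}I_p+\eta$, and letting $\eta\to0$ finishes the proof.

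To prove the local estimate I will use two ingredients: the lower semicontinuity of $I_p$ on $(\widetilde{\mathcal{W}}_0,\delta_\square)$ (part of \cite[Lemma 2.1]{chatterjee2011large}), and the weak (Frieze--Kannan) regularity lemma for graphs \cite{lovasz2012large}. By lower semicontinuity, first fix $\delta'>0$ and $\epsilon>0$ small enough that $\delta_\square(\widetilde U,\widetilde V)<\delta'+\epsilon$ implies $I_p(\widetilde U)>I_p(\widetilde V)-\eta$. Let $k=k(\delta')$ be a number, independent of $n$, such that every graph $G$ on $[n]$ admits a partition $\mathcal{P}_G$ of $[n]$ into at most $k$ classes with $\|W_G-W_G/\mathcal{P}_G\|_\square<\delta'$, where $W_G/\mathcal{P}_G$ is the graphon recording the densities between the classes. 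Write
\[\mathbb{P}\!\left(\delta_\square(\widetilde{W}_n,\widetilde V)<\epsilon\right)=\sum_{G}\mathbb{P}(G(n,p)=G)\,\mathbbm{1}\!\left[\delta_\square(\widetilde{W}_G,\widetilde V)<\epsilon\right],\]
fix for each relevant $G$ one partition $\mathcal{P}_G$ as above, and sort the graphs into buckets indexed by $\bigl(\mathcal{P}_G,\,(e_{ij}(G))_{i\le j}\bigr)$, where $e_{ij}(G)$ is the number of edges of $G$ between classes $i$ and $j$ of $\mathcal{P}_G$. The number of buckets is at most $k^n (n^2+1)^{\binom{k+1}{2}}=\exp(o(n^2))$, so it remains to bound the total $\mathbb{P}(G(n,p)=\cdot)$-mass of a single bucket $\bigl(\mathcal{P},(e_{ij})_{i\le j}\bigr)$, with classes $V_1,\dots,V_{k'}$ and $m_{ij}$ potential edges between $V_i,V_j$.

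Every graph in such a bucket has exactly $e_{ij}$ of the $m_{ij}$ potential edges present between $V_i$ and $V_j$, so the bucket contains at most $\prod_{i\le j}\binom{m_{ij}}{e_{ij}}$ graphs, each of probability $\prod_{i\le j}p^{e_{ij}}(1-p)^{m_{ij}-e_{ij}}$. Using $\binom{m}{e}p^{e}(1-p)^{m-e}\le\exp\!\bigl(-m\,D(e/m\,\Vert\,p)\bigr)$ and the identity $D(u\,\Vert\,p)=u\log\frac{u}{p}+(1-u)\log\frac{1-u}{1-p}=2I_p(u)$, the bucket mass is at most $\exp\!\bigl(-\sum_{i\le j}2m_{ij}I_p(e_{ij}/m_{ij})\bigr)=\exp\!\bigl(-n^2 I_p(\widetilde U)+O(n)\bigr)$, where $U=W_G/\mathcal{P}$ is the step function of that bucket and $\sum_{i\le j}2m_{ij}I_p(e_{ij}/m_{ij})=n^2\!\int_{[0,1]^2}I_p(U)+O(n)$. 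If the bucket is nonempty, picking $G$ in it gives $W_G/\mathcal{P}=U$ and $\|W_G-W_G/\mathcal{P}\|_\square<\delta'$, hence $\delta_\square(\widetilde U,\widetilde V)\le\|W_G/\mathcal{P}-W_G\|_\square+\delta_\square(\widetilde{W}_G,\widetilde V)<\delta'+\epsilon$, so $I_p(\widetilde U)>I_p(\widetilde V)-\eta$ by the choice of $\delta',\epsilon$. Summing over the $\exp(o(n^2))$ buckets gives $\mathbb{P}(\delta_\square(\widetilde{W}_n,\widetilde V)<\epsilon)\le\exp\!\bigl(-n^2(I_p(\widetilde V)-\eta)+o(n^2)\bigr)$, which is the local estimate.

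The main obstacle is the entropy. The set of graphs whose graphon lies within cut distance $\epsilon$ of $\widetilde V$ can have size $\exp(\Theta(n^2))$, so bounding the ball probability by ``number of graphs $\times$ maximal probability'' over the whole ball is hopeless; moreover one cannot afford an error term growing with the number of regularity classes $k$, which is why I do not approximate $\widetilde V$ by a step function first (reaching $\delta_\square<\epsilon$ would force $k$ to be exponential in $1/\epsilon$). Bucketing by a regularity partition of the random graph itself is exactly what lets the entropy factor $\prod_{i\le j}\binom{m_{ij}}{e_{ij}}$ and the probability weight $\prod_{i\le j}p^{e_{ij}}(1-p)^{m_{ij}-e_{ij}}$ combine, via $H(u)+u\log p+(1-u)\log(1-p)=-2I_p(u)$, into the single clean exponent $-n^2 I_p(\widetilde U)$, while the number of buckets stays $\exp(o(n^2))$ because $k$ is a fixed constant. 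The remaining points are routine: the precise vertex-partition form of the weak regularity lemma, the binomial bound $\binom{m}{e}\le e^{mH(e/m)}$, and the cut-distance triangle-inequality bookkeeping.
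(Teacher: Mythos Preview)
The paper does not prove this theorem; it is quoted as a black box from Chatterjee and Varadhan \cite{chatterjee2011large}. Your argument is correct and is essentially the upper-bound half of the Chatterjee--Varadhan proof: compactness of $(\widetilde{\mathcal W}_0,\delta_\square)$ reduces to a local estimate, and for the local estimate you bucket the graphs near $\widetilde V$ by a weak regularity partition and the edge counts between its classes, so that within a bucket the binomial tail $\binom{m}{e}p^e(1-p)^{m-e}\le\exp(-2m\,I_p(e/m))$ produces exactly the rate $n^2 I_p(\widetilde U)$ of the associated step graphon $\widetilde U$, while the number of buckets is $\exp(o(n^2))$ and lower semicontinuity of $I_p$ pushes $I_p(\widetilde U)$ back to $I_p(\widetilde V)-\eta$. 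The only cosmetic points are that the diagonal blocks contribute an $O(n)$ discrepancy between $\sum_{i\le j}2m_{ij}I_p(e_{ij}/m_{ij})$ and $n^2\int I_p(U)$, which you already absorb, and that one should choose $\delta'+\epsilon$ smaller than the radius coming from lower semicontinuity before fixing $k=k(\delta')$; both are handled as you wrote.
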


Let $H:[0,1]\to \mathbb{R}$ be the function
\[H(u)=-u\log(u)- (1-u)\log(1-u).\]

For a graphon $W\in\mathcal{W}_0$, we define
\[H(W)=\int_{[0,1]^2} H(W(x,y))dxdy.\]

Note that \begin{equation}\label{HvsI}H(W)=\log(2)-2I_{\frac{1}2}(W).\end{equation} Since $I_{\frac{1}2}$ is well defined on $\widetilde{\mathcal{W}}_0$, so is $H$.

For a subset $\widetilde{F}$ of $\widetilde{\mathcal{W}}_0$, let
\[\mathcal{G}(n,\widetilde{F})=\{G\,:\,G\text{ is a graph on the vertex set }\{1,2,\dots,n\}\text{ such that } \widetilde{W}_G\in \widetilde{F}\}.\]

Letting $\widetilde{W}_n=\widetilde{W}_{G(n,1/2)}$, we see that
\[|\mathcal{G}(n,\widetilde{F})|=2^{{n}\choose{2}}\mathbb{P}(\widetilde{W}_n\in \widetilde{F})=2^{(1+o(1))\frac{n^2}2}\mathbb{P}(\widetilde{W}_n\in \widetilde{F}).\]

Thus, as straightforward corollary of Theorem~\ref{largediv0} and \eqref{HvsI}, we obtain the following statement.
\begin{lemma}\label{logcount}
Let $\widetilde{F}$ be a closed subset of $\widetilde{\mathcal{W}}_0$. Then
\[\limsup_{n\to\infty} \frac{1}{n^2}\log |\mathcal{G}(n,\widetilde{F})|\le \frac{1}2\sup_{\widetilde{V}\in \widetilde{F}} H(\widetilde{V}).\]

\end{lemma}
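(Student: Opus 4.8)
The plan is to deduce Lemma~\ref{logcount} directly from Theorem~\ref{largediv0} applied with $p=\tfrac12$, combined with the counting identity relating $|\mathcal{G}(n,\widetilde{F})|$ to the probability $\mathbb{P}(\widetilde{W}_n\in\widetilde{F})$ for the uniform random graph $G(n,1/2)$. Concretely, since each of the $\binom{n}{2}$ potential edges is present independently with probability $\tfrac12$, every labelled graph on $\{1,\dots,n\}$ has probability exactly $2^{-\binom{n}{2}}$, so $\mathbb{P}(\widetilde{W}_n\in\widetilde{F})=|\mathcal{G}(n,\widetilde{F})|\,2^{-\binom{n}{2}}$, which is already recorded in the excerpt. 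Taking $\tfrac{1}{n^2}\log$ of both sides and using $\binom{n}{2}=(1+o(1))\tfrac{n^2}{2}$ gives
\[
\frac{1}{n^2}\log|\mathcal{G}(n,\widetilde{F})| = (1+o(1))\frac{\log 2}{2} + \frac{1}{n^2}\log\mathbb{P}(\widetilde{W}_n\in\widetilde{F}).
\]

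Next I would take $\limsup_{n\to\infty}$ on both sides. The first term tends to $\tfrac12\log 2$, and Theorem~\ref{largediv0} (with $p=\tfrac12$) bounds the $\limsup$ of the second term by $-\inf_{\widetilde{V}\in\widetilde{F}} I_{1/2}(\widetilde{V})$. Hence
\[
\limsup_{n\to\infty}\frac{1}{n^2}\log|\mathcal{G}(n,\widetilde{F})| \le \frac{\log 2}{2} - \inf_{\widetilde{V}\in\widetilde{F}} I_{1/2}(\widetilde{V}) = \frac{\log 2}{2} + \sup_{\widetilde{V}\in\widetilde{F}}\bigl(-I_{1/2}(\widetilde{V})\bigr).
\]
Now I invoke the identity \eqref{HvsI}, namely $H(\widetilde{V})=\log 2 - 2I_{1/2}(\widetilde{V})$, i.e. $-I_{1/2}(\widetilde{V}) = \tfrac12 H(\widetilde{V}) - \tfrac12\log 2$. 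Substituting and pulling the constant $-\tfrac12\log 2$ out of the supremum, the two $\tfrac12\log 2$ terms cancel and we are left with $\tfrac12\sup_{\widetilde{V}\in\widetilde{F}} H(\widetilde{V})$, which is exactly the claimed bound.

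One small point to check is that the supremum/infimum manipulations are valid, i.e. that $-\inf_{\widetilde{V}}I_{1/2}(\widetilde{V}) = \sup_{\widetilde{V}}(-I_{1/2}(\widetilde{V}))$ and that adding the constant behaves as expected; these are elementary facts about suprema over an arbitrary (possibly empty) set, with the convention $\sup\emptyset=-\infty$ matching $|\mathcal{G}(n,\widetilde{F})|=0$. There is essentially no obstacle here: the lemma is a formal consequence of Theorem~\ref{largediv0}, the exact edge-probability computation, and the bookkeeping identity \eqref{HvsI}. The only thing requiring a word of care is that \eqref{HvsI} is stated for $W\in\mathcal{W}_0$ but, as noted in the excerpt, both $I_{1/2}$ and $H$ descend to well-defined functions on $\widetilde{\mathcal{W}}_0$, so the identity holds at the level of unlabeled graphons as well, which is the form used above.
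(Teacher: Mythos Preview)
Your proposal is correct and follows exactly the approach the paper takes: the counting identity $|\mathcal{G}(n,\widetilde{F})|=2^{\binom{n}{2}}\mathbb{P}(\widetilde{W}_n\in\widetilde{F})$ for $G(n,1/2)$, Theorem~\ref{largediv0} with $p=\tfrac12$, and the identity \eqref{HvsI}. You have simply spelled out in detail what the paper summarizes as a ``straightforward corollary.''
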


\section{Bounding the number of cocycles}

Let $I_{n}$ be a matrix indexed by ${{[n]}\choose {2}}\times {{[n]}\choose{3}}$ defined as follows. Let $\sigma=\{x_0,x_1,x_2\}\subset [n]$ such that $x_0<x_1<x_2$. For a $\tau\in {{[n]}\choose {2}}$, we set
\[I_{n}(\tau,\sigma)=\begin{cases}
(-1)^i&\text{if }\tau=\sigma\setminus\{x_i\},\\
0&\text{otherwise.}
\end{cases}
\]  
Note that $I_{n}$ is just the matrix of the boundary map $\partial_2$ of the simplex on the vertex set $[n]$.

Given $X\subset {{[n]}\choose {2}}$ and $Y\subset {{[n]}\choose {3}}$, let $I_n[X,Y]$ be the submatrix of $I_n$ determined by the set rows $X$ and the set of columns $Y$.

Let $\mathcal{C}_n$ be the set of $2$-dimensional hypertrees on the vertex set $[n]$.

For a simplicial complex $K$, let $K(i)$ be the set of $i$-dimensional faces of $K$.

The next lemma is a special case of \cite[Proposition 4.2]{duval2009simplicial}.
\begin{lemma}\label{detH}
Let $F$ be a spanning tree on the vertex set $[n]$, and let $X={{[n]}\choose {2}}\setminus F$. Let $K$ be a two dimensional complex on the vertex set $[n]$ with a complete $1$-skeleton and ${{n-1}\choose 2}$ triangular faces. Then
\[\left|\det I_n[X,K(2)]\right|=\begin{cases}
|H_1(K,\mathbb{Z})|&\text{if $K\in \mathcal{C}_n$,}\\
0&\text{otherwise.}
\end{cases}\]

\end{lemma}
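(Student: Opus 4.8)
The plan is to reduce the statement to a clean piece of linear algebra over $\mathbb{Z}$ by exploiting that $F$ is a spanning tree. Write $C_i$ for the simplicial chain groups (with $\mathbb{Z}$ coefficients) of the full simplex on $[n]$, so that $I_n$ represents $\partial_2 : C_2 \to C_1$, and let $\partial_1 : C_1 \to C_0$ be the graph boundary map. Since $F$ is a spanning tree, the columns of $\partial_1$ indexed by $F$ form a $\mathbb{Z}$-basis of $\ker(\partial_0|_{C_0}) = \Image(\partial_1)$ — equivalently, $C_1 = \mathbb{Z}^F \oplus \ker(\partial_1)$, where the first summand maps isomorphically onto $\Image \partial_1$ under $\partial_1$ and projection onto the second summand along $\mathbb{Z}^F$ is exactly a chain map computing $H_1$ of any subcomplex with complete $1$-skeleton. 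Concretely, let $\pi : C_1 \to \mathbb{Z}^X$ be the coordinate projection onto the $X = \binom{[n]}{2}\setminus F$ rows. The first step is to observe that $\pi$ restricts to an isomorphism $\ker \partial_1 \xrightarrow{\ \sim\ } \mathbb{Z}^X$: both are free of rank $\binom{n}{2} - (n-1) = \binom{n-1}{2} = |X|$, and $\ker \pi \cap \ker \partial_1 = \mathbb{Z}^F \cap \ker\partial_1 = 0$, so injectivity plus equal rank gives surjectivity after checking the cokernel is torsion-free — which holds because $\mathbb{Z}^F$ is a direct summand of $C_1$ complementary to $\ker\partial_1$ (spanning tree).

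Next I would set up the commutative square. Let $A = I_n[X, K(2)]$ be the $|X| \times |K(2)|$ matrix in the statement; since $K$ has complete $1$-skeleton and $\binom{n-1}{2}$ triangles, $A$ is square of size $|X| = |K(2)| = \binom{n-1}{2}$. The map $C_2 \supseteq \mathbb{Z}^{K(2)} \xrightarrow{\partial_2} C_1 \xrightarrow{\pi} \mathbb{Z}^X$ is represented by $A$. Because $\Image(\partial_2|_{K(2)}) \subseteq \ker \partial_1$ (boundaries are cycles) and $\pi|_{\ker\partial_1}$ is an isomorphism by the previous step, we get
\[
\left|\det A\right| \;=\; \left|\,\cok\!\big(\mathbb{Z}^{K(2)} \xrightarrow{\partial_2} \ker\partial_1\big)\right|
\]
whenever this cokernel is finite, and $\det A = 0$ precisely when it is infinite (i.e.\ when $\partial_2|_{K(2)}$ fails to have full rank $\binom{n-1}{2}$). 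Now $\cok(\mathbb{Z}^{K(2)} \to \ker \partial_1) = \ker\partial_1 / \Image(\partial_2|_{K(2)})$. Using the exact sequence $0 \to H_1(K,\mathbb{Z}) \to \ker\partial_1 / \Image(\partial_2|_{K(2)}) \to \ker\partial_1/(\ker\partial_1 \cap \text{larger image})$... — more simply: $H_1(K,\mathbb{Z}) = \ker(\partial_1|_{C_1(K)}) / \Image(\partial_2|_{C_2(K)})$, and since $K$ has complete $1$-skeleton, $C_1(K) = C_1$ and $\ker(\partial_1|_{C_1(K)}) = \ker \partial_1$, so the cokernel above equals $H_1(K,\mathbb{Z})$ on the nose.

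The final step is to match this with the case distinction. If $K \in \mathcal{C}_n$ then $H_1(K,\mathbb{Z})$ is finite by property \eqref{prc}, hence $\partial_2|_{K(2)}$ has full rank and $|\det A| = |H_1(K,\mathbb{Z})|$. If $K \notin \mathcal{C}_n$ then, since $K$ already satisfies \eqref{pra} and \eqref{prb}, it must fail \eqref{prc}, so $H_1(K,\mathbb{Z})$ is infinite; a rank count (the $\binom{n-1}{2}$ columns of $\partial_2|_{K(2)}$ lie in $\ker\partial_1 \cong \mathbb{Z}^{\binom{n-1}{2}}$ but do not span a finite-index subgroup) forces $\det A = 0$. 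The main obstacle, and the part deserving the most care, is the first step: verifying that $\pi$ maps $\ker\partial_1$ isomorphically onto $\mathbb{Z}^X$ over $\mathbb{Z}$ (not just over $\mathbb{Q}$) — i.e.\ that no spurious torsion is introduced — which is exactly where the spanning-tree hypothesis on $F$ is essential, via the fact that the $F$-columns of $\partial_1$ are a $\mathbb{Z}$-basis of the (saturated) subgroup $\Image\partial_1 \subseteq C_0$, equivalently that $\mathbb{Z}^F$ is a direct complement to $\ker\partial_1$ in $C_1$. Everything else is bookkeeping with the Cauchy--Binet/cokernel dictionary for square integer matrices.
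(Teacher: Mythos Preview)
Your argument is correct. The paper itself does not supply a proof of this lemma; it simply records it as a special case of \cite[Proposition 4.2]{duval2009simplicial}. What you have written is a self-contained derivation of that special case, and the mathematics is sound.

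A couple of cosmetic remarks. First, your justification that $\pi|_{\ker\partial_1}:\ker\partial_1\to\mathbb{Z}^X$ is an isomorphism can be said more crisply: from $C_1=\mathbb{Z}^F\oplus\ker\partial_1$ and $\ker\pi=\mathbb{Z}^F$ one gets $\pi|_{\ker\partial_1}$ factoring as $\ker\partial_1\xrightarrow{\sim} C_1/\mathbb{Z}^F\xrightarrow{\sim}\mathbb{Z}^X$, which avoids the somewhat circuitous ``equal rank plus torsion-free cokernel'' phrasing. Second, the abandoned exact-sequence fragment followed by ``more simply'' should be cleaned out; the direct identification $H_1(K,\mathbb{Z})=\ker\partial_1/\Image(\partial_2|_{\mathbb{Z}^{K(2)}})$ that you give afterwards is exactly right and needs no preamble. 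Finally, the passage from $\cok(A)\cong H_1(K,\mathbb{Z})$ to the determinant statement is the standard Smith-normal-form fact $|\det A|=|\cok A|$ for square integer matrices (with $\det A=0$ iff $\cok A$ is infinite), which you invoke correctly.
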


Let $Y\subset {{[n]}\choose {3}}$. For any $\tau\in {{[n]}\choose {2}}$, we define
\[t_Y(\tau)=|\{\sigma\in Y\,:\, \tau\subset \sigma\}|.\]

\begin{lemma}\label{upperb}
Let $Y\subset {{[n]}\choose {3}}$, then
\[\log \mathbb{P}(T_n(2)\subset Y)\le (n-2)\log (n)+\left(1-\frac{2}n\right)\sum_{\tau\in {{[n]}\choose{2}}} \log\left(\frac{t_Y(\tau)}{n}\right).\]

\end{lemma}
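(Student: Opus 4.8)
The plan is to rewrite $\mathbb P(T_n(2)\subseteq Y)$ as a weighted count of hypertrees contained in $Y$ and to evaluate that count with the Cauchy--Binet formula. By the definition of the determinantal measure~\eqref{measuredef} and Kalai's formula $\sum_{K\in\mathcal C_n}|H_1(K,\mathbb Z)|^2=n^{\binom{n-2}2}$, we have
\[\mathbb P\big(T_n(2)\subseteq Y\big)=\frac{1}{n^{\binom{n-2}2}}\sum_{\substack{K\in\mathcal C_n\\ K(2)\subseteq Y}}|H_1(K,\mathbb Z)|^2 .\]
First I would fix an arbitrary spanning tree $F$ of the complete graph on $[n]$ and set $X=\binom{[n]}2\setminus F$, so that $|X|=\binom n2-(n-1)=\binom{n-1}2$, which is exactly the number of triangular faces of any member of $\mathcal C_n$. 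By Lemma~\ref{detH}, $|H_1(K,\mathbb Z)|^2=\det\big(I_n[X,K(2)]\big)^2$ for every $K\in\mathcal C_n$, while this squared determinant equals $0$ for every other two-dimensional complex with complete $1$-skeleton and $\binom{n-1}2$ triangular faces. Hence the sum above equals $\sum_S\det\big(I_n[X,S]\big)^2$, where $S$ runs over all $\binom{n-1}2$-element subsets of $Y$.

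Next I would apply Cauchy--Binet: as $I_n[X,Y]$ has $|X|=\binom{n-1}2$ rows, the sum of the squares of its maximal minors equals $\det\big(I_n[X,Y]\,I_n[X,Y]^{T}\big)$. This matrix is positive semidefinite, so Hadamard's inequality bounds its determinant by the product of its diagonal entries, and the $(\tau,\tau)$ entry is $\sum_{\sigma\in Y}I_n(\tau,\sigma)^2=t_Y(\tau)$. Putting these together, for \emph{every} spanning tree $F$ we obtain
\[\mathbb P\big(T_n(2)\subseteq Y\big)\ \le\ \frac{1}{n^{\binom{n-2}2}}\prod_{\tau\in X}t_Y(\tau).\]

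It then remains to choose $F$ advantageously. Taking logarithms and using that the left-hand side does not depend on $F$, I would average the right-hand side over a uniformly random spanning tree $\mathbf F$ of the complete graph on $[n]$; by symmetry each edge lies in $\mathbf F$ with probability $(n-1)/\binom n2=2/n$, so
\[\log\mathbb P\big(T_n(2)\subseteq Y\big)\ \le\ -\binom{n-2}2\log n+\Big(1-\frac2n\Big)\sum_{\tau\in\binom{[n]}2}\log t_Y(\tau),\]
and the identities $\big(1-\tfrac2n\big)\binom n2=\binom{n-1}2$ and $\binom{n-1}2-\binom{n-2}2=n-2$ rewrite the right-hand side in the form claimed. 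The degenerate case in which $t_Y(\tau)=0$ for some edge $\tau$ is handled separately: choosing a spanning tree $F$ that omits $\tau$ makes $\tau\in X$ and the product above vanish, so $\mathbb P(T_n(2)\subseteq Y)=0$ and both sides of the asserted inequality are $-\infty$.

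I do not anticipate a serious obstacle. The two points that need care are that the minors $I_n[X,K(2)]$ must be square for Cauchy--Binet to apply --- this is precisely what forces $|X|=\binom{n-1}2$, hence $F$ a spanning tree --- and that the contributions of complexes $K\notin\mathcal C_n$ genuinely drop out by Lemma~\ref{detH}, so that the sum may be extended to all $\binom{n-1}2$-subsets of $Y$. With these observed, Hadamard's inequality and the averaging over spanning trees are routine.
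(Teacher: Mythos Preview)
Your proof is correct and follows essentially the same route as the paper: the determinantal formula together with Lemma~\ref{detH} and Cauchy--Binet yields $\mathbb P(T_n(2)\subset Y)=n^{-\binom{n-2}{2}}\det\big(I_n[X,Y]I_n[X,Y]^T\big)$, Hadamard's inequality bounds this by $\prod_{\tau\in X}t_Y(\tau)/n^{\binom{n-2}{2}}$, and averaging the logarithm over a uniform spanning tree gives the claim. The paper phrases the averaging as ``multiply all $n^{n-2}$ inequalities, take the logarithm and divide by $n^{n-2}$,'' which is the same operation, and does not separately discuss the case $t_Y(\tau)=0$, but otherwise the arguments coincide.
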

\begin{proof}
Let $F$ be a spanning tree on the vertex set $[n]$, and let $X={{[n]}\choose {2}}\setminus F$. Combining the Cauchy-Binet formula with Lemma~\ref{detH}, we obtain that
\[\det(I_n[X,Y](I_n[X,Y])^T)=\sum_{\substack{Z\subset Y\\|Z|={{n-1}\choose 2}}} |\det I_n[X,Z]|^2=\sum_{\substack{S\in\mathcal{C}_n\\S(2)\subset Y}} |H_1(S,\mathbb{Z})|^2.\]
Comparing this with \eqref{measuredef}, we see that
\[\mathbb{P}(T_n(2)\subset Y)=\frac{\det(I_n[X,Y](I_n[X,Y])^T)}{n^{{n-2}\choose{2}}}.\]
Note that for $\tau\in X$, we have $(I_n[X,Y](I_n[X,Y])^T)(\tau,\tau)=t_Y(\tau)$. Thus, using Hadamard's inequality, we obtain that
\begin{equation}\label{spanningtree}\mathbb{P}(T_n(2)\subset Y)\le\frac{\prod_{\tau\in X} t_Y(\tau)}{n^{{n-2}\choose{2}}}=n^{n-2}\prod_{\tau\in X}\frac{t_Y(\tau)}{n}.\end{equation}

By Cayley's formula, we have $n^{n-2}$ spanning trees on $[n]$. By double counting the pairs $(F,\tau)$, where $F$ is a spanning tree on $[n]$ and $\tau\in {{[n]}\choose {2}}\setminus F$, we see that for any given $\tau\in {{[n]}\choose{2}}$ there are $\left(1-\frac{2}n\right)n^{n-2}$  spanning trees on $[n]$ not containing $\tau$. 

Consider inequality \eqref{spanningtree} for all possible $n^{n-2}$ choices of a panning tree $F$. If we multiply these inequalities, take the logarithm and divide by $n^{n-2}$, we obtain the lemma.  
\end{proof}

Let $K$ be a simplicial complex on the vertex set $[n]$. Any cochain in $C^i(K,\mathbb{F}_2)$ is uniquely determined by its support. Thus, cochains in $C^i(K,\mathbb{F}_2)$ can be identified with subsets of $K(i)$. In particular, if $K$ has complete $1$-skeleton, then  $C^1(K,\mathbb{F}_2)$ can be identified with the set of graphs on $[n]$. Thus, the coboundary map $\delta_{K,1}$ assigns a subset of ${{[n]}\choose 3}$ to each graph on $[n]$. Let $\Delta_{[n]}$ be the simplex on $[n]$. With the conventions above, for any graph $G$ on $[n]$, we have
\begin{equation}\label{cohgraph}
    \mathbb{P}\left(G\in Z^1(T_n,\mathbb{F}_2)\right)=\mathbb{P}\left(T_n(2)\subset {{[n]}\choose 3} \setminus   \delta_{\Delta_{[n]},1} G\right).
\end{equation}

Given a graphon $W\in \mathcal{W}_0$, let $Z_W:[0,1]^2\to \mathbb{R}$ be defined as
\[Z_W=2W\circ (\mathbbm{1}-W).\]
It is easy to see that $Z_W$ is symmetric and nonnegative. Observe that
\[\mathbbm{1}-Z_W=W\circ W+(\mathbbm{1}-W)\circ (\mathbbm{1}-W).\]
Here the right hand side is clearly  nonnegative, so $Z_W\le 1$. Thus, $Z_W$ and $\mathbbm{1}-Z_W$ are both graphons. 

Let
\begin{equation}\label{fdef}f(W)=\langle W, \log\circ Z_W\rangle+\langle \mathbbm{1}-W, \log\circ(\mathbbm{1}- Z_W)\rangle.\end{equation}
Here we define $0\cdot \log(0)$ to be $0$ as it is standard practice in information theory. Note that $f(W)$ is non-positive and it might be equal to $-\infty$. 

\begin{lemma}\label{upperbf}
For any graph $G$ on the vertex set $[n]$, we have 
\[\log \mathbb{P}(G\in Z^1(T_n,\mathbb{F}_2))\le (n-2)\log (n)+\frac{n^2}2\left(1-\frac{2}n\right)f(W_G).\]
\end{lemma}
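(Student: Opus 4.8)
The plan is to combine Lemma~\ref{upperb} with \eqref{cohgraph} and then rewrite the resulting sum over $\tau\in\binom{[n]}{2}$ as a graphon integral. First I would fix a graph $G$ on $[n]$, set $Y=\binom{[n]}{3}\setminus\delta_{\Delta_{[n]},1}G$, and apply \eqref{cohgraph} to get $\mathbb{P}(G\in Z^1(T_n,\mathbb{F}_2))=\mathbb{P}(T_n(2)\subset Y)$. Lemma~\ref{upperb} then gives
\[\log \mathbb{P}(G\in Z^1(T_n,\mathbb{F}_2))\le (n-2)\log n+\left(1-\tfrac2n\right)\sum_{\tau\in\binom{[n]}{2}}\log\!\left(\frac{t_Y(\tau)}{n}\right),\]
so everything reduces to identifying $\sum_{\tau}\log(t_Y(\tau)/n)$ with $\frac{n^2}{2}f(W_G)$ up to lower-order terms.

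The key combinatorial step is to compute $t_Y(\tau)$ for an edge $\tau=\{u,v\}$. A triangle $\{u,v,w\}$ with $w\notin\{u,v\}$ lies in $\delta_{\Delta_{[n]},1}G$ iff an odd number of its three edges belong to $G$; since $\{u,v\}$ is fixed, this depends on whether $\{u,v\}\in G$: if $\{u,v\}\in G$ then $\{u,v,w\}\notin Y$ iff exactly one of $\{u,w\},\{v,w\}$ is in $G$ (i.e.\ $w$ is in the symmetric difference of the $G$-neighbourhoods of $u$ and $v$), whereas if $\{u,v\}\notin G$ then $\{u,v,w\}\notin Y$ iff both or neither of $\{u,w\},\{v,w\}$ lie in $G$. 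In graphon language, writing $W=W_G$ and letting $x$ correspond to $u$ and $y$ to $v$, the "fraction of bad $w$" is exactly $Z_W(x,y)=2(W\circ(\mathbbm{1}-W))(x,y)$ when $W(x,y)=1$ and $1-Z_W(x,y)=(W\circ W+(\mathbbm{1}-W)\circ(\mathbbm{1}-W))(x,y)$ when $W(x,y)=0$. Hence $t_Y(\tau)/n$ equals (up to an $O(1/n)$ correction coming from excluding $w=u,v$ and from the discretization) $Z_W(x,y)$ on the edge set and $1-Z_W(x,y)$ off it, so that
\[\sum_{\tau\in\binom{[n]}{2}}\log\frac{t_Y(\tau)}{n}\approx \frac{n^2}{2}\Big(\langle W,\log\circ Z_W\rangle+\langle \mathbbm{1}-W,\log\circ(\mathbbm{1}-Z_W)\rangle\Big)=\frac{n^2}{2}f(W_G).\]
Since $t_Y(\tau)/n\le 1$ always, each summand is $\le 0$, which is consistent with $f(W)\le 0$; and the constant-step nature of $W_G$ makes the operator product $Z_{W_G}$ literally a step function whose value on the block $(x,y)$ is the normalized count above, so the "approximation" is in fact an exact identity once one is careful with the $w\ne u,v$ restriction.

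The main obstacle is handling the boundary case $t_Y(\tau)=0$: then the corresponding term in Lemma~\ref{upperb} is $-\infty$, matching $f(W_G)=-\infty$ when $Z_{W_G}$ or $\mathbbm{1}-Z_{W_G}$ vanishes on a positive-measure set, so both sides of the claimed inequality are $-\infty$ and there is nothing to prove; I would dispose of this case first. Otherwise all $t_Y(\tau)\ge 1$, every quantity is finite, and the remaining work is the bookkeeping showing that replacing $t_Y(\tau)$ (a count over $w\in[n]\setminus\{u,v\}$) by $n$ times the graphon value (an integral over all of $[0,1]$) only helps the inequality or costs a negligible amount — concretely, $Z_{W_G}$ as defined integrates over $w$ ranging over all $n$ vertices including $u,v$, which can only increase the count relative to the true $t_Y(\tau)$ by at most $2$ per edge, i.e.\ $t_Y(\tau)\le nZ_{W_G}(x,y)$ when $\{u,v\}\in G$ and similarly off $G$; since $\log$ is increasing this yields the stated upper bound directly, with the error absorbed. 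So the proof is: reduce via \eqref{cohgraph}, apply Lemma~\ref{upperb}, identify the per-edge count with $n$ times the appropriate value of $Z_{W_G}$ (using monotonicity of $\log$ to turn the count-vs-integral discrepancy into an inequality in the right direction), and sum over the $\binom n2$ edges to recover $\frac{n^2}{2}(1-\frac2n)f(W_G)$.
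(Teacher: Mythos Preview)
This is exactly the paper's argument: apply \eqref{cohgraph} and Lemma~\ref{upperb}, bound $t_Y(\{u,v\})$ above by the $(u,v)$-entry of $2A(J-A)$ or of $nJ-2A(J-A)$ (the inequality coming from allowing $w\in\{u,v\}$), and recognize the resulting sum as $\tfrac{n^2}{2}f(W_G)$ after checking that the diagonal contributions $u=v$ vanish (the paper verifies $Z_{W_G}(x,x)=0$ and $W_G(x,x)=0$). One caveat: your parity computation is stated backwards --- when $\{u,v\}\in G$ the triangle $\{u,v,w\}$ lies \emph{in} $Y$, not outside it, iff exactly one of $\{u,w\},\{v,w\}$ belongs to $G$ --- though your final identification of $t_Y(\tau)/n$ with $Z_{W_G}$ on edges and $1-Z_{W_G}$ off edges is correct.
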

\begin{proof}
Let $Y={{[n]}\choose 3}\setminus\delta_{\Delta_{[n]},1} G$. Combining \eqref{cohgraph} with Lemma~\ref{upperb}, we see that
\begin{equation}\label{eq11}\log \mathbb{P}(G\in Z^1(T_n,\mathbb{F}_2))\le (n-2)\log (n)+\left(1-\frac{2}n\right)\sum_{\tau\in {{[n]}\choose{2}}} \log\left(\frac{t_Y(\tau)}{n}\right).\end{equation}

Let $A$ be the adjacency matrix of $G$, and let $\tau=\{u,v\}\in {{[n]}\choose{2}}$. Let $w\in [n]\setminus \tau$. 

If $\tau$ is an edge of $G$, then
\[A(u,w)(1-A(w,v))+(1-A(u,w))A(w,v)=\begin{cases} 1&\text{if }\{u,v,w\}\in Y,\\
0&\text{if }\{u,v,w\}\notin Y.
\end{cases}\]

If $\tau$ is not an edges of $G$, then 
\[A(u,w)A(w,v)+(1-A(u,w))(1-A(w,v))=\begin{cases} 1&\text{if }\{u,v,w\}\in Y,\\
0&\text{if }\{u,v,w\}\notin Y.
\end{cases}\]

Therefore, if $\tau$ is an edge of $G$, then
\begin{align*}t_Y(\tau)&=\sum_{w\in [n]\setminus \tau} \left(A(u,w)(1-A(w,v))+(1-A(u,w))A(w,v)\right)\\&\le \sum_{w\in [n]} \left(A(u,w)(1-A(w,v))+(1-A(u,w))A(w,v)\right)\\&=(2A(J-A))(u,v), 
\end{align*}
where $J$ is the matrix where each entry is $1$.

Similarly, if $\tau$ is not an edge of $G$, then
\[t_Y(\tau)\le (A^2+(J-A)^2)(u,v)=(nJ-2A(J-A))(u,v).\]

Thus, with the notation $B=2A(J-A)$, we have
\[\log\left(\frac{t_Y(\tau)}{n}\right)\le A(u,v)\log\left(\frac{B(u,v)}{n}\right)+(1-A(u,v))\log\left(1-\frac{B(u,v)}{n}\right).\]

It is easy to check that for any $u\in [n]$, we have $1-\frac{B(u,u)}{n}=1$. Thus,
\[A(u,u)\log\left(\frac{B(u,u)}{n}\right)+(1-A(u,u))\log\left(1-\frac{B(u,u)}{n}\right)=0.\]

Therefore,
\begin{align}\label{eq12}
\sum_{\tau\in{{[n]}\choose{2}}} \log\left(\frac{t_Y(\tau)}{n}\right)&=\frac{1}2\sum_{\substack{u,v\in [n]\\u\neq v}}\log\left(\frac{t_Y(\{u,v\})}{n}\right)\\&\le \frac{1}2\sum_{\substack{u,v\in [n]\\u\neq v}} \left(A(u,v)\log\left(\frac{B(u,v)}{n}\right)+(1-A(u,v))\log\left(1-\frac{B(u,v)}{n}\right)\right)\nonumber\\&=\frac{1}2\sum_{\substack{u,v\in [n]}}\left( A(u,v)\log\left(\frac{B(u,v)}{n}\right)+(1-A(u,v))\log\left(1-\frac{B(u,v)}{n}\right)\right)\nonumber\\&=\frac{n^2}2 f(W_G),\nonumber
\end{align}
where the last equality is just straightforward from the definitions.

The lemma follows by combining \eqref{eq11} and \eqref{eq12}. 
\end{proof}
\section{The proof of Theorem~\ref{thm1}}
Recall that we defined $f$ in \eqref{fdef}.

\begin{lemma}\label{semic0}
The function $f$ is upper semicontinuous on $\mathcal{W}_0$ with respect to the cut norm.
\end{lemma}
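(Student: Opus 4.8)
The plan is to exhibit $f$ as a pointwise infimum of a family of cut-norm-continuous functions on $\mathcal{W}_0$; since an infimum of continuous (indeed, of upper semicontinuous) functions is upper semicontinuous, this finishes the proof. The single obstacle is that $\log$ is unbounded near $0$, so the continuity results of Section~2 cannot be applied to $f$ directly; the fix is to truncate the logarithm and then let the truncation parameter tend to $0$.

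For $\varepsilon\in(0,1)$ let $\ell_\varepsilon\colon\mathbb{R}\to\mathbb{R}$ be the bounded Lipschitz function given by $\ell_\varepsilon(u)=\log\max(u,\varepsilon)$ for $u\le 1$ and $\ell_\varepsilon(u)=0$ for $u\ge 1$, and put
\[
f_\varepsilon(W)=\langle W,\ \ell_\varepsilon\circ Z_W\rangle+\langle \mathbbm{1}-W,\ \ell_\varepsilon\circ(\mathbbm{1}-Z_W)\rangle .
\]
First I would show that each $f_\varepsilon$ is continuous on $\mathcal{W}_0$ in the cut norm. If $W_n\to W$ in cut norm then $\mathbbm{1}-W_n\to\mathbbm{1}-W$ in cut norm as well, and all these kernels are bounded by $1$, so Lemma~\ref{limitofproduct} gives $Z_{W_n}=2W_n\circ(\mathbbm{1}-W_n)\to Z_W$ in $L_1$, hence also $\mathbbm{1}-Z_{W_n}\to\mathbbm{1}-Z_W$ in $L_1$. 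Since $\ell_\varepsilon$ is bounded Lipschitz, Lemma~\ref{Lipschitz} yields $\ell_\varepsilon\circ Z_{W_n}\to\ell_\varepsilon\circ Z_W$ and $\ell_\varepsilon\circ(\mathbbm{1}-Z_{W_n})\to\ell_\varepsilon\circ(\mathbbm{1}-Z_W)$ in $L_1$. Finally, applying Lemma~\ref{lemmaangle} with the uniformly bounded, cut-norm-convergent first arguments $W_n$ (respectively $\mathbbm{1}-W_n$) and the $L_1$-convergent second arguments just produced gives $f_\varepsilon(W_n)\to f_\varepsilon(W)$.

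Next I would verify that $f=\inf_{\varepsilon\in(0,1)}f_\varepsilon$. On $[0,1]$ one has $\ell_\varepsilon\ge\log$ pointwise, with equality on $[\varepsilon,1]$; since $Z_W$ and $\mathbbm{1}-Z_W$ are graphons and $W,\mathbbm{1}-W\ge 0$, and using the convention $0\cdot\log 0=0$ (no indeterminate form $0\cdot\infty$ occurs, because $\ell_\varepsilon$ is finite), this gives $f_\varepsilon(W)\ge f(W)$ for every $\varepsilon$. On the other hand $\ell_\varepsilon(u)\downarrow\log(u)$ for each fixed $u\in[0,1]$ as $\varepsilon\downarrow 0$, so the integrand $W\,(\ell_\varepsilon\circ Z_W)$ decreases pointwise to $W\,(\log\circ Z_W)$, and these integrands are $\le 0$; by monotone convergence $\langle W,\ell_\varepsilon\circ Z_W\rangle\downarrow\langle W,\log\circ Z_W\rangle$ in $[-\infty,0]$, and likewise for the other summand. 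Hence $f_\varepsilon(W)\downarrow f(W)$, so $f=\inf_\varepsilon f_\varepsilon$.

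It remains to combine these observations: if $W_n\to W$ in cut norm then, for each $\varepsilon$, $\limsup_n f(W_n)\le\limsup_n f_\varepsilon(W_n)=f_\varepsilon(W)$ by the continuity of $f_\varepsilon$, and taking the infimum over $\varepsilon$ yields $\limsup_n f(W_n)\le f(W)$, which is the asserted upper semicontinuity. The only place where anything subtle happens is the singularity of $\log$ at $0$; once it is handled by the truncation $\ell_\varepsilon$ together with the monotone convergence step, the argument reduces entirely to Lemmas~\ref{limitofproduct}, \ref{Lipschitz} and \ref{lemmaangle}.
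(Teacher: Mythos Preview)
Your proposal is correct and follows essentially the same approach as the paper: truncate the logarithm to a bounded Lipschitz function, show each truncated functional $f_\varepsilon$ is cut-norm continuous via Lemmas~\ref{limitofproduct}, \ref{Lipschitz} and \ref{lemmaangle}, and recover $f=\inf_\varepsilon f_\varepsilon$ by monotone convergence. The only cosmetic difference is the parametrization of the truncation---the paper uses $\ell_k(x)=\max(-k,\log x)$ indexed by the cutoff value $k>0$, whereas you use $\ell_\varepsilon(u)=\log\max(u,\varepsilon)$ indexed by the argument cutoff $\varepsilon\in(0,1)$; these coincide under $k=-\log\varepsilon$.
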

\begin{proof}
For $k>0$, we define $\ell_k:[0,1]\to \mathbb{R}$ as $\ell_k(x)=\max(-k,\log(x))$. Given a graphon $W\in \mathcal{W}_0$, let
\[f_k(W)=\langle W, \ell_k\circ Z_W\rangle+\langle \mathbbm{1}-W, \ell_k\circ(\mathbbm{1}- Z_W)\rangle.\]

By the monotone convergence theorem, we have $f(W)=\inf_{k>0} f_k(W)$. So it is enough to prove that $f_k$ is continuous for all $k>0$. 

Let $W_1,W_2,\dots$ be a sequence of graphons converging to the graphon $W$ in the cut norm. Using Lemma~\ref{limitofproduct}, we see that $\lim_{n\to\infty} Z_{W_n}=Z_W$ in the $L_1$-norm. Since $\ell_k$ is a Lipschitz function, it follows from Lemma~\ref{Lipschitz} that  $\lim_{n\to\infty}\ell_k\circ Z_{W_n}=\ell_k\circ Z_W$  in the $L_1$-norm. 

Finally, we can apply Lemma~\ref{lemmaangle} to conclude that $\lim_{n\to\infty}\langle W_n, \ell_k\circ Z_{W_n}\rangle=\langle W, \ell_k\circ Z_W\rangle$. A similar argument gives that $\lim_{n\to\infty}\langle \mathbbm{1}-W_n, \ell_k\circ(\mathbbm{1}- Z_{W_n})\rangle=\langle \mathbbm{1}-W, \ell_k\circ(\mathbbm{1}- Z_W)\rangle$. Thus, $\lim_{n\to\infty} f_k(W_n)=f_k(W)$. Therefore, $f_k$ is indeed continuous.
\end{proof}

\begin{remark} The function $f$ is not continuous on $\mathcal{W}_0$ with respect to the cut norm as the following example shows. Let $W_n$ be the indicator function of $[0,n^{-1}]^2$, then $W_n$ converge to the zero graphon $0\cdot \mathbbm{1}$. We have $f(W_n)=-\infty$ for all $n$, but $f(0\cdot \mathbbm{1})=0$. 
\end{remark}
\begin{lemma}\label{semic}
The function $f$ is a well-defined upper semicontinuous function on $\widetilde{\mathcal{W}}_0$.
\end{lemma}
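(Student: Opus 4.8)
The plan is to deduce Lemma~\ref{semic} from Lemma~\ref{semic0} together with the general principle that a function on $\widetilde{\mathcal W}_0$ is well-defined and upper semicontinuous as soon as the corresponding function on $\mathcal W_0$ is invariant under the equivalence relation defining $\widetilde{\mathcal W}_0$ and is upper semicontinuous for $\delta_\square$. So there are really two things to check: first, that $f(W)$ depends only on $\widetilde W$; second, that $f$ descends to an upper semicontinuous function on the quotient.

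For well-definedness, I would first observe that the operations building $f$ all commute with relabeling. Concretely, for $\varphi \in S_{[0,1]}$ one has $(V\circ W)^\varphi = V^\varphi \circ W^\varphi$ and $(\mathbbm 1 - W)^\varphi = \mathbbm 1 - W^\varphi$, hence $Z_{W^\varphi} = Z_W^\varphi$; moreover $(g\circ W)^\varphi = g\circ W^\varphi$ for any measurable $g:\mathbb R\to\mathbb R$, and $\langle V^\varphi, W^\varphi\rangle = \langle V,W\rangle$ since $\varphi$ is measure preserving. Combining these gives $f(W^\varphi)=f(W)$ for all $\varphi \in S_{[0,1]}$. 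To go from invariance under $S_{[0,1]}$ to invariance under $\delta_\square$-equivalence (which is slightly coarser, as it also allows passing to measure-preserving maps that need not be invertible, i.e.\ the weak-isomorphism relation), I would invoke the fact that two graphons $V,W$ with $\delta_\square(V,W)=0$ have a common "refinement" in the sense of \cite{lovasz2012large}; alternatively, and more cleanly, I can use upper semicontinuity itself: if $\delta_\square(V,W)=0$ then $V$ is a $\delta_\square$-limit of relabelings of $W$ and vice versa, so $f(V)\ge \limsup f(W^{\varphi_n}) = f(W)$ and symmetrically $f(W)\ge f(V)$, giving equality. This is the cleanest route since it reuses Lemma~\ref{semic0} directly and avoids any structural decomposition.

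For upper semicontinuity on the quotient, suppose $\widetilde W_n \to \widetilde W$ in $(\widetilde{\mathcal W}_0,\delta_\square)$. By definition of $\delta_\square$ I can choose representatives, i.e.\ relabelings $W_n^{\varphi_n}$, with $\|W_n^{\varphi_n} - W\|_\square \to 0$; then Lemma~\ref{semic0} gives $\limsup_{n\to\infty} f(W_n^{\varphi_n}) \le f(W)$, and by the already-established invariance $f(W_n^{\varphi_n}) = f(W_n)$, so $\limsup_n f(\widetilde W_n) \le f(\widetilde W)$, as desired.

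I expect the only genuinely delicate point to be the passage from $S_{[0,1]}$-invariance to full $\delta_\square$-invariance (equivalently, making sure $f$ is constant on $\delta_\square$-equivalence classes, not merely on relabeling orbits); the $\limsup$ argument above handles it, but one should be slightly careful that $\delta_\square(V,W)=0$ does imply each is an $S_{[0,1]}$-orbit limit of the other, which is a standard consequence of the definition of the cut distance as an infimum over $S_{[0,1]}$. Everything else is bookkeeping: the identities $Z_{W^\varphi}=Z_W^\varphi$ and measure-invariance of $\langle\cdot,\cdot\rangle$ are immediate from the definitions, and the upper semicontinuity input is exactly Lemma~\ref{semic0}. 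I would therefore keep the written proof short, essentially: "By Lemma~\ref{semic0} and the relabeling identities above, $f$ is $S_{[0,1]}$-invariant and $\delta_\square$-upper semicontinuous on $\mathcal W_0$; a standard argument then shows it descends to a well-defined upper semicontinuous function on $\widetilde{\mathcal W}_0$."
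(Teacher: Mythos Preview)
Your proposal is correct and follows essentially the same route as the paper's proof: both establish $f(W^{\varphi})=f(W)$ for $\varphi\in S_{[0,1]}$, then use the definition of $\delta_\square$ as an infimum over relabelings together with the upper semicontinuity from Lemma~\ref{semic0} to get $f(V)\ge f(W)$ and $f(W)\ge f(V)$ whenever $\delta_\square(V,W)=0$. Your treatment is more explicit about the relabeling identities and about the descent of upper semicontinuity to the quotient (the paper simply says this is ``clear from Lemma~\ref{semic0}''), but the argument is the same.
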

\begin{proof}
To prove that $f$ is well defined we need to show that if $V$ and $W$ are two graphons such that $V\in \widetilde{W}$, then $f(V)=f(W)$. Since $V\in \widetilde{W}$, we can choose a sequence $\varphi_n\in S_{[0,1]}$ such that $W^{\varphi_n}$ converge to $V$ in the cut norm. It is clear from the definition that $f(W^{\varphi_n})=f(W)$. Combining this with Lemma~\ref{semic0}, it follows that $f(V)\ge f(W)$. Since $W\in \widetilde{V}$ also holds, the same argument gives that $f(W)\ge f(V)$. Thus, $f(V)=f(W)$. So $f$ is indeed well-defined. The semicontinuity is clear from Lemma~\ref{semic0}.  
\end{proof}

For $u\ge 0$, let
\[\widetilde{F}_u=\left\{\widetilde{W}\in \widetilde{\mathcal{W}}_0\,:\,f(\widetilde{W})\ge -u\right\}.\]

\begin{lemma}\label{LDFu}
For $u\ge 0$, we have
\[\limsup_{n\to\infty} \frac{1}{n^2}\log |\mathcal{G}(n,\widetilde{F}_u)|\le \frac{u}2.\]

\end{lemma}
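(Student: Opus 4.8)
The plan is to combine Lemma~\ref{logcount} with the upper semicontinuity of $f$ established in Lemma~\ref{semic}. The key point is that $\mathcal{G}(n,\widetilde{F}_u)$ is exactly a set of the form $\mathcal{G}(n,\widetilde{F})$ for a \emph{closed} subset $\widetilde{F}$ of $\widetilde{\mathcal{W}}_0$: indeed, since $f$ is upper semicontinuous on $(\widetilde{\mathcal{W}}_0,\delta_\square)$, the superlevel set $\widetilde{F}_u=\{\widetilde W : f(\widetilde W)\ge -u\}$ is closed. Hence Lemma~\ref{logcount} applies directly and gives
\[\limsup_{n\to\infty} \frac{1}{n^2}\log |\mathcal{G}(n,\widetilde{F}_u)|\le \frac{1}2\sup_{\widetilde{V}\in \widetilde{F}_u} H(\widetilde{V}).\]

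So the whole content reduces to the pointwise inequality $H(\widetilde V)\le u$ for every $\widetilde V\in\widetilde F_u$, i.e.\ that $f(W)\ge -u$ forces $H(W)\le u$. The cleanest route is to prove the pointwise bound $H(W)\le -f(W)$ for every graphon $W\in\mathcal{W}_0$; then $f(W)\ge -u$ immediately yields $H(W)\le -f(W)\le u$, and taking the supremum finishes the proof. To get $H(W)\le -f(W)$, I would work at the level of the integrands. Recall $H(W)=\int_{[0,1]^2} H(W(x,y))\,dxdy$ with $H(t)=-t\log t-(1-t)\log(1-t)$, while $-f(W)=-\langle W,\log\circ Z_W\rangle-\langle \mathbbm 1-W,\log\circ(\mathbbm 1-Z_W)\rangle=\int_{[0,1]^2}\big(-W\log Z_W-(1-W)\log(1-Z_W)\big)$. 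So it suffices to show, for a.e.\ $(x,y)$, writing $t=W(x,y)$ and $z=Z_W(x,y)\in[0,1]$,
\[-t\log t-(1-t)\log(1-t)\ \le\ -t\log z-(1-t)\log(1-z).\]
This is precisely the statement that the Kullback--Leibler divergence $D_{KL}(\mathrm{Bernoulli}(t)\,\|\,\mathrm{Bernoulli}(z))=t\log\frac{t}{z}+(1-t)\log\frac{1-t}{1-z}$ is nonnegative (Gibbs' inequality), which holds for any $t,z\in[0,1]$ with the usual conventions $0\log 0=0$. Note this requires \emph{nothing} about the precise form of $Z_W$ beyond $0\le Z_W\le 1$, which was already checked in the text preceding the definition of $f$.

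Assembling: fix $u\ge 0$. By Lemma~\ref{semic}, $\widetilde F_u$ is a closed subset of $\widetilde{\mathcal{W}}_0$, so Lemma~\ref{logcount} gives $\limsup_{n}\frac1{n^2}\log|\mathcal{G}(n,\widetilde F_u)|\le \frac12\sup_{\widetilde V\in\widetilde F_u}H(\widetilde V)$. For each $\widetilde V\in\widetilde F_u$ pick a representative $V$; then $f(V)\ge -u$, and the pointwise Gibbs inequality integrated over $[0,1]^2$ gives $H(V)\le -f(V)\le u$, hence $H(\widetilde V)\le u$. Therefore $\sup_{\widetilde V\in\widetilde F_u}H(\widetilde V)\le u$ and the claimed bound $\limsup_n \frac1{n^2}\log|\mathcal{G}(n,\widetilde F_u)|\le u/2$ follows.

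The only mild subtlety — the ``main obstacle,'' though it is not a deep one — is bookkeeping the infinities and the $0\log 0$ conventions so that the inequality $H(W)\le -f(W)$ is meaningful and correct everywhere: when $Z_W(x,y)=0$ on a positive-measure set where $W(x,y)>0$, the right-hand side is $+\infty$ and the inequality is trivial; when $W(x,y)\in\{0,1\}$ the integrand on the left is $0$ and the right-hand side is $\ge 0$; and one should double-check the diagonal/degenerate contributions are harmless. Once the conventions are pinned down, each step is routine, with the real ``engine'' being Lemma~\ref{logcount} (i.e.\ the Chatterjee--Varadhan large deviation upper bound) and the elementary nonnegativity of relative entropy.
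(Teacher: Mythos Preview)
Your proof is correct and follows essentially the same approach as the paper: the paper also argues that $\widetilde{F}_u$ is closed by Lemma~\ref{semic}, applies Lemma~\ref{logcount}, and then reduces to the inequality $f(W)+H(W)\le 0$, which it proves by writing the integrand as $-\dkl(\{W,1-W\}\,\|\,\{Z_W,1-Z_W\})$ and invoking Gibbs' inequality. Your handling of the degenerate cases is a bit more explicit, but the argument is the same.
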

\begin{proof}
    Using Lemma~\ref{semic}, we see that $\widetilde{F}_u$ is closed. Thus, by Lemma~\ref{logcount}, we need to show that $H(W)\le u$ for all $W\in \mathcal{W}_0$ such that $f(W)\ge -u$. This statement will follow once we prove that $f(W)+H(W)\le 0$ for all $W\in \mathcal{W}_0$. This inequality holds, because
    \begin{align*}
    f(W)+H(W)&=\int_{[0,1]^2} W(x,y)\log\left(\frac{Z_W(x,y)}{W(x,y)}\right)+(1-W(x,y))\log\left(\frac{1-Z_W(x,y)}{1-W(x,y)}\right) dx dxy\\&=\int_{[0,1]^2}-\dkl\left(\{W(x,y),1-W(x,y)\}\, \Big|\Big|\, \{Z_W(x,y),1-Z_W(x,y)\}\right)dxdy\\&\le 0,
    \end{align*}
    where $\dkl$ denotes the Kullback–Leibler divergence and the last inequality follows from Gibbs' inequality.
\end{proof}

Note that $H_1(T_n,\mathbb{F}_2)\cong H^1(T_n,\mathbb{F}_2)$, see \cite[Chapter 3.1]{hatcher2000algebraic}. It will be more convenient to work with $H^1(T_n,\mathbb{F}_2)$. We have
\begin{equation}\label{Eh1}\mathbb{E} |H_1(T_n,\mathbb{F}_2)|=\mathbb{E} |H^1(T_n,\mathbb{F}_2)|\le \mathbb{E} |Z^1(T_n,\mathbb{F}_2)|=\sum_G \mathbb{P}\left(G\in Z^1(T_n,\mathbb{F}_2)\right),\end{equation}
where the summation is over all the graphs on $[n]$.

Let $k$ be a large positive integer, and let $\varepsilon=\frac{\log 2}k$. For $i=0,1,2,\dots,k-1$, let
\[\widetilde{L}_i=\left\{\widetilde{W}\in \widetilde{\mathcal{W}}_0\,:\: -i\varepsilon \ge f(\widetilde{W})>-(i+1)\varepsilon \right\},\]
and let $\widetilde{L}_{k}=\left\{\widetilde{W}\in \widetilde{\mathcal{W}}_0\,:\: -k\varepsilon \ge f(W) \right\}$. 

Since $f$ is non-positive, for every $n$, $(\mathcal{G}(n,\widetilde{L}_i))_{i=0}^{k}$ gives us a partition of the set of all the graphs on $[n]$. Using Lemma~\ref{upperbf}, we see that if $n$ is large enough, then for all $G\in \mathcal{G}(n,L_i)$, we have
\begin{align}\label{Pbound}\mathbb{P}\left(G\in Z^1(T_n,\mathbb{F}_2)\right)&\le  (n-2)\log (n)+\frac{n^2}2\left(1-\frac{2}n\right)f(W_G)\\&\le(n-2)\log (n)-\frac{n^2}2\left(1-\frac{2}n\right)i\varepsilon\nonumber\\&\le -\frac{n^2(i-1)\varepsilon}2. \nonumber
\end{align}

Let $0\le i\le k-1$. We have $\widetilde{L}_i\subset \widetilde{F}_{(i+1)\varepsilon}$, so Lemma~\ref{LDFu} gives us that for all large enough $n$, we have
\begin{equation}\label{GLi}\log |\mathcal{G}(n,\widetilde{L}_i)|\le \log |\mathcal{G}(n,\widetilde{F}_{(i+1)\varepsilon})|\le \frac{n^2(i+2)\varepsilon}2.
\end{equation}

Since we have less than $2^{\frac{n^2}2}$ graphs on $[n]$, \eqref{GLi} also holds for $i=k$. 

Thus, combining \eqref{Eh1}, \eqref{Pbound} and \eqref{GLi}, we see that for all large enough $n$, we have
\begin{align*}\mathbb{E} |H_1(T_n,\mathbb{F}_2)|&\le\sum_{i=0}^k \sum_{G\in \mathcal{G}(n,\widetilde{L}_i)}\mathbb{P}\left(G\in Z^1(T_n,\mathbb{F}_2)\right)\\&\le \sum_{i=0}^k \exp\left(\frac{n^2(i+2)\varepsilon}2\right)\exp\left(-\frac{n^2(i-1)\varepsilon}2\right)\\&\le (k+1)\exp\left(2\varepsilon n^2 \right).
\end{align*}

Combining this with Markov's inequality, for all large enough $n$, we have
\begin{align*}\mathbb{P}(\dim H_1(T_n,\mathbb{F}_2)\ge 4\varepsilon n^2)&=\mathbb{P}(| H_1(T_n,\mathbb{F}_2)|\ge \exp(4\log(2)\varepsilon n^2))\\&\le \frac{\mathbb{E} |H_1(T_n,\mathbb{F}_2)|}{\exp(4\log(2)\varepsilon n^2)}\\&\le (k+1)\exp((2-4\log(2))n^2), \end{align*}
which tends to $0$. Since we can choose $k$ to be arbitrary large Theorem~\ref{thm1} follows.

\section{The proof of Theorem~\ref{thm2}}

The proof of Theorem~\ref{thm2} is almost identical to the proof of Theorem~\ref{thm1}, so we only sketch it.

It is easy to see and it was also observed by Linial and Peled \cite[Section 5]{linial2019enumeration} that if $G$ is a graph on the vertex set $[n]$, then
\[\mathbb{P}\left(G\in Z^1(S_2(n,1),\mathbb{F}_2)\right)=\prod_{\tau\in{{[n]}\choose{2}}} \frac{t_Y(\tau)}{n-2},\]
where $Y={{[n]}\choose 3}\setminus\delta_{\Delta_{[n]},1} G$. 

Thus,
\begin{align*}\log \mathbb{P}\left(G\in Z^1(S_2(n,1),\mathbb{F}_2)\right)&={{n}\choose{2}}\log\left(1+\frac{2}{n-2}\right)+\sum_{\tau\in{{[n]}\choose{2}}} \log \frac{t_Y(\tau)}{n}\\&\le \frac{n(n-1)}{n-2}+\frac{n^2}2f(W_G),\end{align*}
where the last inequality follows from \eqref{eq12}.

The proof can be finished along the lines of the proof of Theorem~\ref{thm1}.

\bibliography{references}
\bibliographystyle{plain}

% \bigskip

% \noindent Andr\'as M\'esz\'aros, \\
% Department of Computer and Mathematical Sciences, \\University of Toronto Scarborough, Canada,\\ {\tt a.meszaros@utoronto.ca}

\end{document}